\newtheorem{theorem}{Theorem}[section]
\newtheorem{corollary}[theorem]{Corollary}
\newtheorem{lemma}[theorem]{Lemma}
\newtheorem{proposition}[theorem]{Proposition}
\newtheorem{definition}[theorem]{Definition}
\theoremstyle{remark}
\theoremstyle{definition}
\theoremstyle{definition}
\numberwithin{equation}{section}
\string\usetikzlibrary{decorations.markings} to use arrow with markings}{}}{}%
\renewcommand{\phi}{\varphi}
\def\A{\mathbb{A}}
\def\N{\mathbb{N}}
\def\Z{\mathbb{Z}}
\def\PP{\mathbb{P}}
\def\Q{\mathbb{Q}}
\newcommand{\cO}{{\mathcal O}}
\newcommand{{\OL}}{{{\mathcal O}_L}}
\newcommand{\lra}{\longrightarrow}
\newcommand{\lct}{{\mathrm{lct}}}
\renewcommand{\epsilon}{\varepsilon}
\newcommand{\SL}{\mathrm{SL}}
\newcommand{\GL}{\mathrm{GL}}
\title{A note on the semistability of singular projective hypersurfaces}
\author{Thomas Mordant}
\address{Universit\'e Paris-Saclay, Laboratoire de Math\'ematiques d'Orsay, 91405 Orsay Cedex, France}
\email{thomas.mordant@universite-paris-saclay.fr}
\begin{document}

\maketitle

\begin{abstract}
In this note, we give sufficient conditions for the (semi)stability of a  hypersurface  $H$ of $\mathbb{P}^N_k$ in terms of its degree $d$, the maximal multiplicity $\delta$ of its singularities, and the dimension $s$ of its singular locus.  For instance, we show that $H$ is semistable when $d \geq \delta \min (N+1, s+3)$. The proof relies in particular on Benoist's lower bound 
for the dimension of the intersection of the singular locus $H_{\mathrm{sing}}$ of $H$ with some linear subspace of $\mathbb{P}^N_k$ associated to a one-parameter subgroup $\lambda$ of $\mathrm{SL}_{N+1, k}$, in terms of the numerical data in the Hilbert-Mumford criterion applied to $\lambda$ and to an equation $F_H$ of~$H$.

\smallskip
\noindent \textsc{Keywords:} singular projective hypersurfaces, geometric invariant theory.  
\end{abstract}

\tableofcontents

\section{Introduction}

\subsection{} Throughout this note, we shall fix two integers $N \geq 1$ and $d \geq 2$, an algebraically closed field~$k$, and a hypersurface $H$ of degree $d$ in the projective space $\PP^N_k$, that is an effective Cartier divisor defined by the vanishing of a homogeneous polynomial $F_H \in k[X_0,\dots,X_N]_d \setminus \{0\}$.

We are interested in the (semi)stability of the hypersurface $H$, namely in the (semi)stability in the sense of geometric invariant theory, of the point $[F_H] \in \PP^{\binom{N+d}{d}-1}(k)$ with respect to the natural action of $\SL_{N+1,k}$ on $k[X_0,\dots,X_N]_d$.

The purpose of this note is to prove the following      
theorem.

\begin{theorem}\label{MainThm} Let $\delta$ be the maximal multiplicity of $H$ at a point of $H(k)$, and let $s$ be the dimension of the singular locus $H_{\mathrm{sing}}$ of $H$.

\begin{enumerate}
\item \label{HM isol mult} If  the following condition holds:
\begin{equation} \label{eq HM isol mult}
d \geq \delta \, \min(N+1, s+3)  \quad (\mbox{resp. }  d > \delta \, \min(N+1, s+3)\, ),
\end{equation}
then $H$ is semistable (resp. stable).

\item  \label{HM isol not cone} Assume $N\geq 2.$ If for every point $P \in H(k)$ where $H$ has multiplicity $\delta$,  
the projective tangent cone $\PP(C_P H)$ in $\PP(T_P \PP^N_k) \simeq \PP^{N-1}_k$ is not the cone\footnote{When $N=2$, this condition has to be interpreted as follows: \emph{the support of $\PP(C_P H)$ is not a unique point in $\PP(T_P\PP^2_k) \simeq \PP_k^{1}$}.} over  some hypersurface in a projective hyperplane of $\PP_k^{N-1}$, and  if  the following condition holds:
\begin{equation} \label{eq HM isol not cone}
d \geq (\delta-1) \min(N+1, s+3)  \quad (\mbox{resp. }  d > (\delta-1)  \min(N+1, s+3)\, ),
\end{equation} 
then $H$ is semistable (resp. stable).
\end{enumerate}

\end{theorem}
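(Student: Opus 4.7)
The plan is to verify the Hilbert--Mumford numerical criterion. Given a one-parameter subgroup $\lambda$ of $\SL_{N+1,k}$, after conjugating by a suitable element of $\SL_{N+1,k}$ we may assume that $\lambda$ acts diagonally in the chosen coordinates $X_0, \ldots, X_N$ with integer weights $r_0 \geq r_1 \geq \cdots \geq r_N$ summing to zero. Semistability (resp.\ stability) of $H$ is then equivalent to the existence, for every such $\lambda$, of a monomial $X_0^{a_0}\cdots X_N^{a_N}$ in the support of $F_H$ with $\sum_i r_i a_i \leq 0$ (resp.\ $<0$). I would argue by contradiction: suppose there exists a non-trivial $\lambda$ for which every monomial in the support of $F_H$ has strictly positive (resp.\ nonnegative) $\lambda$-weight, and aim to violate either (\ref{eq HM isol mult}) or (\ref{eq HM isol not cone}).

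Under this assumption, Benoist's lower bound applied to $\lambda$ and $F_H$ yields, for some index $j \in \{0, \ldots, N\}$, an explicit lower bound on $\dim\bigl(H_{\mathrm{sing}} \cap L_j\bigr)$, where $L_j := V(X_0, \ldots, X_j) \subset \PP^N_k$, expressed in terms of $d$, $\delta$, $N$, and the numerical data $(r_0, \ldots, r_N)$. On the other hand, $\dim\bigl(H_{\mathrm{sing}} \cap L_j\bigr)$ is bounded above by $\min(s,\, N-j-1)$: the first term because $\dim H_{\mathrm{sing}} = s$, the second because $\dim L_j = N-j-1$. Combining Benoist's lower bound with this upper bound, for the value of $j$ saturating the lower bound, produces a numerical inequality tying together $d$, $\delta$, $N$ and $s$. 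The expression $\min(N+1, s+3)$ in (\ref{eq HM isol mult}) emerges after optimizing $j$: the regime $j = N$ (empty $L_j$) contributes the term $N+1$, while the regime in which the $s$-dimensional constraint dominates contributes $s+3$.

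For part (\ref{HM isol not cone}), the hypothesis that the projective tangent cone $\PP(C_P H)$ at a point $P$ of multiplicity $\delta$ is not a cone over a hypersurface of a hyperplane is used as follows: after translating $P$ to a coordinate vertex adapted to $\lambda$, this condition ensures that the leading form of $F_H$ at $P$ genuinely involves every coordinate and is not of the form \emph{linear} times \emph{(degree $\delta-1$)}. This yields one additional unit of $\lambda$-weight among the surviving monomials, allowing one to replace $\delta$ by $\delta-1$ in the final estimate. The low-dimensional case $N=2$, in which the non-cone hypothesis degenerates into a condition on the support of $\PP(C_P H)$, has to be treated by a short direct argument.

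The main obstacle I expect is the bookkeeping that matches Benoist's lower bound with the correct choice of index $j$: one must verify that the two distinct regimes $N+1$ and $s+3$ in $\min(N+1, s+3)$ correspond to two well-identified ranges of $j$, and that the lower bound is genuinely saturated (not merely dominated) in each regime, so that the resulting numerical contradiction is sharp enough to yield the claimed threshold rather than a weaker one. A secondary technical point will be to ensure that the point $P$ of multiplicity $\delta$ used in part (\ref{HM isol not cone}) can be placed in generic position with respect to the flag $(L_j)_{j}$ diagonalizing $\lambda$, so that the refined weight estimate applies uniformly.
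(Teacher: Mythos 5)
Your overall strategy --- reducing to the Hilbert--Mumford numerical criterion for a diagonalized one-parameter subgroup and invoking Benoist's lower bound --- is the right one and is the paper's. But as written the plan has a genuine gap in Part (\ref{HM isol mult}): Benoist's estimate alone cannot produce the factor $\delta$ in the threshold $d \geq \delta\min(N+1,s+3)$. Benoist's lemma (Proposition \ref{alpha sing}) states that if $\deg_\alpha(F_H\circ g) < \alpha_u + (d-1)\alpha_v$ with $u+v+s'=N$, then $\dim(H_{\mathrm{sing}}\cap L) \geq s'$ for a suitable linear subspace $L$; neither the hypothesis nor the conclusion sees the multiplicities of $H$, so your claim that it yields a bound ``expressed in terms of $d$, $\delta$, $N$'' is not accurate. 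Combining its contrapositive with $\dim H_{\mathrm{sing}} = s$ only yields the weight inequalities $\deg_\alpha(F_H\circ g) \geq \alpha_u + (d-1)\alpha_v$ for $u+v=N-s-1$, which control the partial sum $\sum_{i=1}^{N-s-2}\alpha_i$ (Corollary \ref{alpha isol 2}) but contain no $\delta$. The missing ingredient is a separate local estimate at the distinguished point $P = g([0:\cdots:0:1])$ of the flag: the multiplicity $\delta_P \leq \delta$ of $H$ at $P$ forces a monomial $X_{j_1}\cdots X_{j_{\delta_P}}X_N^{d-\delta_P}$ in $F_H\circ g$, whence $\deg_\alpha(F_H\circ g) \geq (d-2\delta_P)\alpha_N - \delta_P\sum_{i=1}^{N-1}\alpha_i$ (Proposition \ref{alpha deg loc}). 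The actual argument is then a dichotomy rather than a contradiction-plus-optimization over $j$: either $\sum_{i=1}^{N-s-2}\alpha_i>0$, in which case Corollary \ref{alpha isol 2} already gives positivity of $\deg_\alpha(F_H\circ g)$, or this partial sum is $\leq 0$, in which case the local estimate at $P$, after discarding that partial sum and bounding the remaining $\min(N+1,s+3)-2$ terms by $\alpha_N$, gives $\deg_\alpha(F_H\circ g) \geq \big(d-\delta_P\min(N+1,s+3)\big)\alpha_N$. Your plan never performs this local step for Part (\ref{HM isol mult}), so $\delta$ has no way to enter the estimate.

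A second, smaller defect concerns Part (\ref{HM isol not cone}): you propose to place ``the point $P$ of multiplicity $\delta$'' in generic position with respect to the flag, but you do not get to choose $P$ --- it is forced to be $g([0:\cdots:0:1])$ by the one-parameter subgroup, and it need not have multiplicity $\delta$ at all. One must instead split into cases: if $\delta_P \leq \delta-1$, the Part (\ref{HM isol mult}) estimate already suffices under \eqref{eq HM isol not cone}; if $\delta_P=\delta$, the non-cone hypothesis guarantees that the degree-$\delta$ leading form of the dehomogenized equation at $P$ genuinely involves the variable carrying the second-largest weight, which supplies the extra unit of weight and replaces $\delta$ by $\delta-1$ in the final bound. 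With these two corrections your outline becomes the paper's proof.
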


The integer $\delta$ may take any value in $\{1, \dots, d\},$ and the integer $s$ any value in $\{-1, 0, \dots, N-1\}$.  Indeed the divisor   $H$ is smooth, or equivalently $H_{\mathrm{sing}}$ is empty, iff $\delta= 1$, or iff $s=-1$.  Moreover $H$ is reduced iff $s \leq N-2$, and (when $N \geq 2$) is normal iff $s \leq N-3$.

Observe that the condition on the projective tangent cones in (2) may hold only when $\delta \geq 2$.

\subsection{} Let us spell out a few special cases of Theorem \ref{MainThm}.

\subsubsection{}\label{MainThmSmooth}

Applied with $\delta =1$ and $s=-1$, Part (1) shows that, when the hypersurface $H$ is smooth, it is semistable (resp. stable) if $d\geq 2$ (resp. if $d>2$). 

This is a classical result of Mumford \cite[Proposition 4.2]{MumfordFogartyKirwan94}: the semistability is a consequence of the characterization of the smoothness of $H$ by the non-vanishing of the discriminant of the homogeneous polynomial $F_H$; the stability when $d >2$ follows from the finiteness, in this case, of the group of projective transformations of $\PP_k^N$ which leave $H$ invariant.\footnote{In \emph{loc. cit.}, Mumford analyses the characteristic zero situation, where this follows from the vanishing of $\Gamma(H, T_H)$, established by Kodaira and Spencer \cite[Lemma 14.2]{Kodaira-Spencer58}. This finiteness in characteristic zero is actually a classical result of Jordan \cite{Jordan80}, which has been extended in positive characteristic by Matsumura and Monsky; see  \cite[Theorem 1]{Matsumura-Monsky64}. For a discussion of the construction and of the properties of the discriminant of homogeneous polynomials in any number of variables, the reader may refer to \cite{GKZ08}, and, for a presentation also valid in positive characteristic, to \cite{Demazure12}.}

\subsubsection{} Applied with $s=0,$ Part (1) shows that, when the hypersurface $H$ has only isolated singularities, it is semistable (resp. stable) when one of the following conditions is satisfied:
\begin{equation}\label{N1isolated}
 N= 1 \quad \mbox{and} \quad d \geq 2 \delta \quad (\mbox{resp. } d > 2 \delta),
 \end{equation}
or:
\begin{equation}\label{N>1isolated} N\geq  2 \quad \mbox{and} \quad d \geq 3 \delta \quad (\mbox{resp. } d > 3 \delta).
\end{equation}

In case \eqref{N1isolated}, this criterion for (semi)stability  is indeed a classical result, established in \cite[Chapter 4, \S 1]{MumfordFogartyKirwan94} as a direct consequence of the Hilbert-Mumford criterion\footnote{The Hilbert-Mumford criterion also shows that, when $N=1$, these conditions are necessary for the (semi)stability.}; see also \cite[10.2]{Dolgachev03} and \cite[Prop. 7.9]{Mukai03}. 

Applied to non-smooth hypersurfaces, the criterion in \eqref{N>1isolated} is  significant  when $d \geq 6$ (resp. when $d > 6$), in which case it appears to be new.  

\subsubsection{}  We shall use the semistability criteria in Theorem \ref{MainThm} to compare the Griffiths-Kato height of a pencil of projective hypersurfaces, which we have computed in \cite{Mordant22}, with 
its GIT height, defined by using the associated curve in the GIT quotient $\PP(k[X_0,\dots, X_N]_d)/\!/ \SL_{N+1,k};$ see \cite{MordantGIT}.  

To achieve this, we will apply Theorem \ref{MainThm} to hypersurfaces admitting only isolated semi-homo\-gene\-ous singularities  (aka ordinary multiple points), namely singular points $P$ such that the projective tangent cone $\PP(C_P H)$ is a smooth hypersurface in  $\PP(T_P \PP^N_k).$  When this holds and when $N\geq 2$ and $\delta \geq 2$, Part (2) of Theorem \ref{MainThm} establishes that the hypersurface $H$ is semistable (resp. stable) if the following condition is satisfied:
\begin{equation}
d \geq 3 (\delta -1) \quad (\mbox{resp. } d > 3 (\delta -1)) .
\end{equation}

\subsubsection{}  Our main tool in order to establish Theorem \ref{MainThm} is a remarkable estimate due to Benoist \cite[Lemma 3.2]{Benoist14}, which provides a lower bound for the dimension of the intersection of the singular locus $H_{\mathrm{sing}}$ of $H$ with some linear subspace of $\PP^N_k$ associated to some one-parameter subgroup $\lambda$ of $\SL_{N+1, k}$ in terms of the numerical data in the Hilbert-Mumford criterion applied to $\lambda$ and $F_H$. 

This lower bound is used in \cite{Benoist14} to establish the stability of Hilbert points associated to large classes of smooth complete intersections in projective spaces. The arguments in \cite{Benoist14} provide a proof of the (semi)stability of smooth hypersurfaces discussed in \ref{MainThmSmooth}  above that avoids the use of the discriminant and of finiteness theorems \emph{\`a la} Jordan in the classical  proof.\footnote{The case of smooth hypersurfaces is covered by the general results  in \cite{Benoist14}, but in this case, the proof of (semi)stability becomes much simpler: it is  a straightforward consequence of \cite[Lemma 3.5 and 3.6]{Benoist14} together with the Hilbert-Mumford criterion.}

It turns out that Benoist's lower bound also leads to (semi)stability criteria valid for singular hypersurfaces, as demonstrated by Theorem \ref{MainThm}.  Its proof will be given in Section \ref{PfMTvar} below, after we have formulated the Hilbert-Mumford criterion for the (semi)stability of hypersurfaces and established some preliminary estimates in Section \ref{secHM}, and then recalled Benoist's lower bound and stated some relevant corollaries in Section  \ref{secBenoist}. 
We also illustrate the flexibility of our method by discussing some variants of Theorem \ref{MainThm} in Subsection \ref{subsecVar}.

Geometric invariant theory is presented in several beautiful introductory texts (\cite{Kraft84}, \cite{Dolgachev03}, \cite{Mukai03}, \cite{Hoskins16}) which are accessible with a limited background in algebraic geometry. Our proof of Theorem \ref{MainThm} based on Benoist's lower bound relies only on a few basic results in algebraic geometry, and should be  accessible to a beginner in algebraic geometry, with some familiarity with the basic results of geometric invariant theory as presented in these texts.

\subsection{} To the best of our knowledge, the only results concerning the (semi)stability of singular hypersurfaces  of arbitrary large degree or dimension  available  in the literature appear in the works of  Tian \cite{Tian94} and Lee \cite{Lee08}, valid in characteristic zero. Lee's results have been extended by Okawa \cite{Okawa11} to positive characteristic.

In an appendix (Section \ref{LeeLC}), we describe the results of Lee and Tian, and compare them with ours. 
An advantage of our approach compared with  Tian and Lee's ones is that these   require the condition:
$$d \geq N+1.$$
Another point in favor of our approach is that it is directly valid in arbitrary characteristic.

For special values of $N\geq 2$ and $d\geq 2$, various classification results,\footnote{valid in characteristic zero, and in large enough characteristic.} established by means of the Hilbert-Mumford criterion, provide characterizations of  (semi)stable hypersurfaces of degree $d$ in $\PP^N$. Notably the following results are classical:

\begin{enumerate}

\item When $d = 2$, the quadric $H$ is semistable if and only if it is smooth, and it is never stable; see for instance \cite[Chapter 4, \S 2]{MumfordFogartyKirwan94}, 
and  \cite[Example 10.1]{Dolgachev03}.

\item When $(N,d) = (2, 3)$, the cubic curve $H$ is semistable (resp. stable) if and only if it admits only ordinary double points (resp. it is smooth); this goes back to Hilbert's seminal paper \cite{Hilbert93}, where a basic version of the Hilbert-Mumford appears; see  \cite[10.3]{Dolgachev03} for a modern treatment.

\item When $(N,d) = (3,3)$, the cubic surface $H$ is semistable (resp. stable) if and only if it admits only ordinary double points or cusps\footnote{A cusp is a singularity where the equation of the hypersurface admits a local expansion of the form
$
x^2 + y^2 + z^3 = 0.
$
} (resp. only ordinary double points); see \cite{Hilbert93}, 
and  \cite[Proposition 6.5]{Beauville09} for a detailed exposition.
\end{enumerate}

Other complete descriptions of semistable and stable hypersurfaces have been found for $(N,d) = (3,4)$ (quartic surfaces) by Shah (see for instance \cite{Shah81}), for $(N,d) = (4,3)$ (cubic three-folds) by Allcock  \cite{Allcock03} and by Yokoyama \cite{Yokoyama02},
and for $(N,d) = (5, 3)$ (cubic four-folds) by Laza  \cite{Laza09} and by Yokoyama \cite{Yokoyama08}. See also \cite[IV.4.2, V.5.2, and VI.6.7]{Huybrechts23} for a description of these results in the general context of the geometry of  cubic hypersurfaces. 

The complexity of these results indicates that one cannot expect to establish complete classification results for general values of $(N,d)$. 

\subsection{Acknowledgments} I thank Jean-Beno\^it Bost and Gerard Freixas for their suggestions and comments concerning earlier versions of this note, and Damien Simon for his help with its typing. I owe an intellectual debt to Olivier Benoist, whose beautiful paper \cite{Benoist14} provided the inspiration for this work. I am grateful to an anonymous referee for a careful reading of this note, and helpful comments and suggestions for its improvement.

During the work leading to this note, I benefitted from the support of the Fondation Math\'ematique Jacques Hadamard, as a \emph{Hadamard Lecturer.}

\section{The Hilbert-Mumford criterion for projective hypersurfaces}\label{secHM}

\subsection{$\alpha$-degree and the Hilbert-Mumford criterion.}
To establish the (semi)stability of hypersurfaces of degree $d$ in $\PP^N_k,$ we will apply the Hilbert-Mumford criterion (see \cite{Hilbert93}, \cite[Theorem 2.1]{MumfordFogartyKirwan94}) to the natural action of $\SL_{N+1,k}$ on $k[X_0,\dots,X_N]_d$. 

To achieve this, it is convenient to introduce the notion of  \emph{$\alpha$-degree} of a nonzero homogeneous polynomial in $k[X_0, \dots, X_N]_d$, as in \cite[2.3]{Benoist14}.

\begin{definition}
    \label{alpha degree}
Let $\alpha = (\alpha_0,\dots,\alpha_N)$ be an element of $\Z^{N+1}$ and $m=(m_0, \dots, m_N)$ an element of $\N^{N+1}$. The $\alpha$-degree of the monomial $X^m:= X_0^{m_0} \dots X_N^{m_N}$ is the integer defined by:
$$
\deg_{\alpha} X^m := \sum_{i = 0}^N m_i \alpha_i.
$$
The $\alpha$-degree $\deg_{\alpha} F$ of a nonzero homogeneous polynomial
$$
F(X_0,\dots,X_N) :=
\sum_{\substack{m \in \N^{N+1}\\ \vert m \vert = d}} \lambda_m X^m \in k[X_0,\dots,X_N]_d \setminus \{0\}
$$
is defined to be the maximum of the $\alpha$-degrees of the monomials appearing in $F$ with nonzero coefficients:
$$
\deg_{\alpha} F  := \underset{\substack{m \in \N^{N+1} \\ \vert m \vert = d \\ \lambda_m \neq 0}}{\mathrm{max}} \deg_{\alpha}X^m.
$$
\end{definition}

Indeed, with this notation, it is straightforward that the Hilbert-Mumford criterion takes the following form (see for instance \cite[Lemme 2.13]{Benoist14}): 

\begin{proposition}
\label{degree and HM}
The hypersurface $H$ is semistable (resp. stable) if and only if, for every $g \in \GL_{N+1}(k)$ and for every multi-index $\alpha = (\alpha_0,\dots,\alpha_N) \in \Z^{N+1} \setminus \{0\}$, the following implication holds:
\begin{equation}\label{eqHM}
\sum_{i = 0}^N \alpha_i = 0 \Longrightarrow \deg_{\alpha}(F_H \circ g) \geq 0 \quad(\mbox{resp. } \deg_{\alpha}(F_H \circ g) > 0).
\end{equation}
\end{proposition}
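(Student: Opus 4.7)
The plan is to derive the proposition as a direct translation of the Hilbert--Mumford numerical criterion (\cite[Theorem 2.1]{MumfordFogartyKirwan94}) applied to the natural action of $\SL_{N+1,k}$ on $V := k[X_0,\dots,X_N]_d$: namely, that $[F_H] \in \PP(V)$ is semistable (resp.\ stable) if and only if the Mumford weight $\mu([F_H], \lambda)$ is $\geq 0$ (resp.\ $> 0$) for every nontrivial one-parameter subgroup $\lambda : \Gm \to \SL_{N+1,k}$.

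The first step is the standard reduction to diagonal 1-PS. Since $k$ is algebraically closed, the image of any 1-PS $\lambda$ of $\SL_{N+1,k}$ lies in some maximal torus, so one can write $\lambda(t) = g \, \lambda_\alpha(t) \, g^{-1}$ for some $g \in \SL_{N+1}(k)$ and some $\alpha = (\alpha_0,\dots,\alpha_N) \in \Z^{N+1}$ with $\sum_i \alpha_i = 0$, where $\lambda_\alpha(t) := \mathrm{diag}(t^{\alpha_0},\dots,t^{\alpha_N})$; the 1-PS $\lambda$ is nontrivial if and only if $\alpha \neq 0$. The equivariance of the Mumford weight under conjugation, $\mu(x, g \lambda' g^{-1}) = \mu(g^{-1} \cdot x, \lambda')$, then yields
\[
\mu([F_H], \lambda) = \mu([F_H \circ g], \lambda_\alpha),
\]
where we adopt the left-action convention $h \cdot F := F \circ h^{-1}$ (so that $g^{-1} \cdot F_H = F_H \circ g$).

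The heart of the argument is an elementary computation of $\mu([F], \lambda_\alpha)$ for arbitrary $F = \sum_m \lambda_m X^m \in V \setminus \{0\}$: each monomial $X^m$ is an eigenvector of $\lambda_\alpha(t)$ (acting through $h \cdot F = F \circ h^{-1}$) with eigenvalue $t^{-\deg_\alpha X^m}$, so the standard formula $\mu([F], \lambda_\alpha) = -\min_i i$ taken over the $\lambda_\alpha$-weights $i$ appearing in $F$ gives
\[
\mu([F], \lambda_\alpha) = \max_{m \in \N^{N+1},\, \lambda_m \neq 0} \deg_\alpha X^m = \deg_\alpha F.
\]
Combining this with the previous step yields $\mu([F_H], \lambda) = \deg_\alpha(F_H \circ g)$, so the (semi)stability of $[F_H]$ is equivalent to $\deg_\alpha(F_H \circ g) \geq 0$ (resp.\ $> 0$), with $g$ ranging over $\SL_{N+1}(k)$ and $\alpha \in \Z^{N+1} \setminus \{0\}$ satisfying $\sum_i \alpha_i = 0$.

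Finally, enlarging $\SL_{N+1}(k)$ to $\GL_{N+1}(k)$ is harmless: for any $g \in \GL_{N+1}(k)$, since $k$ is algebraically closed one may write $\det g = c^{N+1}$ for some $c \in k^\times$ and set $h := c^{-1} g \in \SL_{N+1}(k)$; the homogeneity of $F_H$ gives $F_H \circ g = c^d (F_H \circ h)$, which has the same $\alpha$-degree as $F_H \circ h$. The only real difficulty in this proof is the careful tracking of sign and action conventions, which must be chosen consistently so that the displayed formula features the maximum rather than the minimum of $\deg_\alpha X^m$; everything else is mechanical.
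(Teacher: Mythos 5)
Your proof is correct and is exactly the standard argument that the paper leaves implicit (it states the proposition as a ``straightforward'' consequence of the Hilbert--Mumford criterion, citing Benoist's Lemme 2.13 rather than giving a proof): reduction of a one-parameter subgroup to a conjugate of a diagonal one, conjugation-equivariance of the Mumford weight, the computation $\mu([F],\lambda_\alpha)=\deg_\alpha F$, and the harmless passage from $\SL_{N+1}(k)$ to $\GL_{N+1}(k)$ via homogeneity of $F_H$. The sign/convention bookkeeping you flag is handled consistently, and in any case the final statement is insensitive to it since $\alpha$ ranges over all of $\Z^{N+1}\setminus\{0\}$ with $\sum_i \alpha_i = 0$.
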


Observe that, after possibly changing $g$, it is enough to check this criterion on the multi-indices $(\alpha_0,\dots,\alpha_N)$ such that $\alpha_0 \leq \dots \leq \alpha_N$.

\subsection{Some lower bounds on $\deg_\alpha (F_H \circ g)$}

The following proposition establishes some preliminary lower bounds on the $\alpha$-degree $\deg_\alpha (F_H \circ g)$ which occurs in the Hilbert-Mumford criterion  \eqref{eqHM} in terms of the geometry of the hypersurface $H$ at the point: $$P:=g ([0:\dots:0:1]).$$

\begin{proposition}
\label{alpha deg loc}
Let $g$ be an element of $\GL_{N+1}(k)$ and $P:=g ([0:\dots:0:1])$,  and let $\alpha = (\alpha_0,\dots,\alpha_N)$ be a multi-index in $\Z^{N+1} \setminus \{0\}$ such that $\alpha_0 \leq \dots \leq \alpha_N$ and $\sum_{i = 0}^N \alpha_i = 0$. 

\begin{enumerate}
\item \label{alpha deg loc non H} If $P$ is not in the hypersurface $H$, then the following inequality holds:
\begin{equation}
\label{ineq alpha deg loc non H} \deg_{\alpha}(F_H \circ g) \geq d\, \alpha_N > 0.
\end{equation}

\end{enumerate}

In the following statements, we assume that $P$ is in $H$ and we denote by $\delta_P$ the multiplicity of $H$ at $P$.

\begin{enumerate}[resume]
\item \label{alpha deg loc sing mult} The following inequality holds:
\begin{equation}
\label{ineq alpha deg loc sing mult} \deg_{\alpha}(F_H \circ g) \geq (d-2 \delta_P) \alpha_N - \delta_P \sum_{i = 1}^{N-1} \alpha_i.
\end{equation}
\end{enumerate}

From now on, we also assume that $N \geq 2.$

\begin{enumerate}[resume]
\item \label{alpha deg loc sing non plane} If the support of the projective tangent cone $\PP(C_P H) \subset \PP(T_P \PP^N_k) \simeq \PP^{N-1}_k$ to the hypersurface~$H$ at $P$ is not a projective hyperplane in $\PP(T_P \PP^N_k)$, then the following inequality holds:
\begin{equation}
\label{ineq alpha deg loc sing non plane} \deg_{\alpha}(F_H \circ g) \geq (d - 2 \delta_P + 1) \alpha_N - (\delta_P - 2) \alpha_1 - (\delta_P - 1) \sum_{i = 2}^{N-1} \alpha_i.
\end{equation}

\item \label{alpha deg loc sing non cone} If the projective tangent cone $\PP(C_P H)$ 
 is not the cone\footnote{As in Theorem \ref{MainThm}, when $N=2$, this condition has to be interpreted as follows: \emph{the support of $\PP(C_P H)$ is not a unique point in $\PP(T_P\PP^2_k) \simeq \PP_k^{1}$}.} over  some hypersurface in a projective hyperplane of $\PP(T_P \PP^N_k)$, then the following inequality holds:
\begin{equation}
\label{ineq alpha deg loc sing non cone} \deg_{\alpha}(F_H \circ g) \geq (d - 2 \delta_P + 1) \alpha_N - (\delta_P - 1) \sum_{i = 1}^{N-2} \alpha_i - (\delta_P - 2) \alpha_{N-1}.
\end{equation}
\end{enumerate}
\end{proposition}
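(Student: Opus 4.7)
My plan is to exhibit, in each of the four cases, an explicit monomial of $G := F_H \circ g$ whose $\alpha$-degree meets the claimed bound. I would first pass to the affine chart $X_N \neq 0$ and set $y_i := X_i/X_N$ for $i < N$, so that $P$ corresponds to the origin of this chart, and expand
\begin{equation*}
G(X_0,\ldots,X_N) = X_N^d \sum_{k \geq 0} G_k(y_0,\ldots,y_{N-1})
\end{equation*}
with $G_k$ homogeneous of degree $k$. The multiplicity assumption forces $G_k = 0$ for $k < \delta_P$ and $G_{\delta_P} \neq 0$; each monomial $y_0^{m_0}\cdots y_{N-1}^{m_{N-1}}$ of $G_{\delta_P}$ corresponds to the monomial $X_0^{m_0}\cdots X_{N-1}^{m_{N-1}} X_N^{d-\delta_P}$ of $G$, of $\alpha$-degree $\sum_{i<N} m_i \alpha_i + (d-\delta_P)\alpha_N$.

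Part (1) is immediate: if $P \notin H$, the monomial $X_N^d$ appears in $G$, of $\alpha$-degree $d\alpha_N$, and $\alpha_N > 0$ since $\sum_i \alpha_i = 0$, $\alpha \neq 0$, and $\alpha_0 \leq \cdots \leq \alpha_N$. For part (2), any monomial $y^m$ of $G_{\delta_P}$ satisfies $\sum_{i<N} m_i \alpha_i \geq \delta_P \alpha_0$ since $\alpha_i \geq \alpha_0$, whence $\deg_\alpha G \geq \delta_P \alpha_0 + (d-\delta_P)\alpha_N$, which I would then rearrange to \eqref{ineq alpha deg loc sing mult} using $\alpha_0 = -\sum_{i \geq 1} \alpha_i$.

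For parts (3) and (4), my approach is to translate the geometric hypothesis into a combinatorial property of $G_{\delta_P}$ by contrapositive. In case (3), if $G_{\delta_P}$ were $c \cdot y_0^{\delta_P}$ for some $c \in k$, the support of the tangent cone would be the hyperplane $\{y_0=0\}$, contradicting the hypothesis; so $G_{\delta_P}$ admits a monomial with $m_0 \leq \delta_P-1$, and
\begin{equation*}
\sum_{i<N} m_i \alpha_i \;\geq\; m_0\alpha_0 + (\delta_P - m_0)\alpha_1 \;\geq\; (\delta_P-1)\alpha_0 + \alpha_1
\end{equation*}
using $\alpha_1 \geq \alpha_0$. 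In case (4), if $G_{\delta_P}$ were independent of $y_{N-1}$, the tangent cone would be the cone with apex $[0{:}\cdots{:}0{:}1]$ (in $y$-coordinates) over the hypersurface $\{G_{\delta_P}=0\}$ inside $\{y_{N-1}=0\}$, again excluded; so some monomial has $m_{N-1} \geq 1$, and symmetrically $\sum_{i<N} m_i \alpha_i \geq (\delta_P-1)\alpha_0 + \alpha_{N-1}$. Adding $(d-\delta_P)\alpha_N$ and eliminating $\alpha_0$ via $\sum_i \alpha_i = 0$ converts both to the stated forms; the footnote case $N=2$ in (4), where the hypothesis reduces to the support not being a single point, fits into the same framework.

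The point requiring most care is that the hypotheses in (3) and (4) are coordinate-free while the monomial conditions on $G_{\delta_P}$ are tied to the specific basis determined by $g$. I use only the \emph{forward} direction of the correspondence: the excluded shape of $G_{\delta_P}$ (either $c\cdot y_0^{\delta_P}$, or independence of $y_{N-1}$) produces a tangent cone of the forbidden type, so the stronger geometric hypothesis excludes it. The converse implications --- that every hyperplane tangent cone must be $\{y_0=0\}$, or that every coned tangent cone must have apex $[0{:}\cdots{:}0{:}1]$ --- are false in general but are never invoked.
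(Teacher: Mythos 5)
Your proposal is correct and follows essentially the same route as the paper: pass to the affine chart $X_N\neq 0$, identify the projective tangent cone with the zero locus of the lowest-degree homogeneous part of the dehomogenized equation, exhibit a suitable monomial in each case (using the contrapositive to rule out the shapes $c\,y_0^{\delta_P}$ and ``independent of $y_{N-1}$''), and bound its $\alpha$-degree using $\alpha_0\leq\cdots\leq\alpha_N$ and $\sum_i\alpha_i=0$. Your explicit remark that only the forward direction of the coordinate-dependent/coordinate-free correspondence is needed is exactly the point the paper's argument relies on.
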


\begin{proof}
Let $(x_0,\dots,x_{N-1})$ be the coordinate system induced by the homogeneous coordinates $X_0,\dots,$ $X_N$ on the affine open subset $(X_N \neq 0)$, and let $f(x_0,\dots,x_{N-1})$ be the polynomial defined by the restriction of the homogeneous polynomial $F_H \circ g$ to this subset. Observe that the intersection subscheme $g^{-1}(H) \cap (X_N \neq 0)$ is precisely the zero locus of $f$.

For \eqref{alpha deg loc non H}, if $P$ is not a point in the hypersurface $H$, i.e. if the point $[0 : \dots : 0 : 1]$ satisfies that the value $(F_H \circ g)([0 : \dots : 0 : 1])$ does not vanish, then the polynomial $f$ has a nonzero constant term, i.e. the polynomial $F_H \circ g$ admits a monomial in $X_N^d$ with nonzero coefficient. By definition of the $\alpha$-degree, the following inequality holds:
$$
\deg_{\alpha}(F_H \circ g) \geq d\,  \alpha_N > 0,
$$
where we have used that the vector $\alpha$ is nonzero and that $\alpha_N$ is its highest entry, so it is positive. This shows \eqref{alpha deg loc non H}.

Now let us assume that $P$ is 
in $H$, and let us denote by $\delta_P$ the multiplicity of $H$ at $P$. Observe that the multiplicity of the hypersurface $g^{-1}(H) = (F_H \circ g = 0)$ at the 
point $g^{-1}(P) = [0 : \dots : 0 : 1]$ is also $\delta_P$. By definition of the multiplicity, this means that every monomial appearing in $f$ with nonzero coefficient has degree at least $\delta_P$, and that the homogeneous part of degree $\delta_P$ of $f$, which we shall denote by $f_{\delta_P}$, does not vanish.

For \eqref{alpha deg loc sing mult}, we simply choose a monomial appearing with nonzero coefficient in $f_{\delta_P}$; it is of the form $x_{j_1} \dots x_{j_{\delta_P}}$ where $j_1,\dots,j_{\delta_P}$ are $\delta_P$ integers between $0$ and $N-1$. This means that the polynomial $F_H \circ g$ admits a monomial of the form $X_{j_1} \dots X_{j_{\delta_P}} X_N^{d-\delta_P}$ with nonzero coefficient. Consequently 
the following inequality holds:
$$
\deg_{\alpha}(F_H \circ g) \geq \sum_{p = 1}^{\delta_P} \alpha_{j_p} + (d - \delta_P) \alpha_N
\geq \delta_P \, \alpha_0 + (d - \delta_P) \alpha_N
= (d-2 \delta_P) \alpha_N - \delta_P \sum_{i = 1}^{N-1} \alpha_i,
$$
where we have first used the inequality $\alpha_0 \leq \alpha_j$, then  the equality $\sum_{i = 0}^N \alpha_i = 0$. 
This shows \eqref{alpha deg loc sing mult}.

For the remainder of the proof, we assume that $N \geq 2$ and observe that under the following isomorphism of projective spaces:
$$
\PP^{N-1}_k  \simeq \PP(T_0 \A^N_k)  
\lra \PP(T_{[0 : \dots : 0 : 1]} \PP^N_k)
\overset{\mathrm{d}g}{\lra} \PP(T_P \PP^N_k),
$$
the projective tangent cone $\PP(C_P H) \subset \PP(T_P \PP^N_k)$ is identified with the hypersurface of degree $\delta_P$ in~$\PP^{N-1}_k$ defined by the vanishing of the homogeneous polynomial $f_{\delta_P}(x_0,\dots,x_{N-1})$.

For \eqref{alpha deg loc sing non plane}, if the support of the projective tangent cone $\PP(C_P H) \subset \PP(T_P \PP^N_k)$ is not a projective hyperplane, then using the above remark, the polynomial $f_{\delta_P}$ is not a multiple of $x_0^{\delta_P}$. Consequently this polynomial admits a monomial with nonzero coefficient of the form $x_{j_1} \dots x_{j_{\delta_P}}$ where $j_1,\dots,j_{\delta_P}$ are $\delta_P$ integers that are not all equal to $0$. Without loss of generality, we can assume that $j_{\delta_P}$ is at least $1$. 

Then the polynomial $F_H \circ g$ admits a monomial of the form $X_{j_1} \dots X_{j_{\delta_P}} X_N^{d-\delta_P}$ with nonzero coefficient. Consequently the following inequalities hold:
\begin{align*}
\deg_{\alpha}(F_H \circ g) &\geq \sum_{p = 1}^{\delta_P} \alpha_{j_p} + (d - \delta_P) \alpha_N\\
&\geq \sum_{p = 1}^{\delta_P - 1} \alpha_{j_p} + \alpha_{j_{\delta_P}} + (d - \delta_P) \alpha_N\\
&\geq (\delta_P-1) \alpha_0 + \alpha_1 + (d - \delta_P) \alpha_N\\
&\geq (d - 2 \delta_P + 1) \alpha_N - (\delta_P - 2) \alpha_1 - (\delta_P - 1) \sum_{i = 2}^{N-1} \alpha_i.
\end{align*}
This shows \eqref{alpha deg loc sing non plane}.

Finally let us show \eqref{alpha deg loc sing non cone}. 
If $N \geq 3$ and the projective tangent cone $\PP(C_P H) \subset \PP(T_P \PP^N_k) \simeq \PP^{N-1}_k$ is not the cone over some hypersurface in a projective hyperplane of $\PP_k^{N-1}$, then using the above remark, the polynomial $f_{\delta_P}(x_0,\dots,x_{N-1})$ actually depends on $x_{N-1}$: it is not a polynomial in $x_0,\dots,x_{N-2}$. The same holds if $N = 2$ and the support of $\PP(C_P H)$ is not a unique point in $\PP(T_P\PP^N_k) \simeq \PP_k^{1}$. In both cases, the polynomial $f_{\delta_P}$ admits a monomial with nonzero coefficient of the form $x_{j_1} \dots x_{j_{\delta_P-1}} x_{N-1}$ where $j_1,\dots,j_{\delta_P-1}$ are $\delta_P-1$ integers between $0$ and $N-1$. 

Consequently the polynomial $F_H \circ g$ admits a monomial of the form $X_{j_1} \dots X_{j_{\delta_P-1}} X_{N-1} X_N^{d-\delta_P}$ with nonzero coefficient, and the following inequalities hold:
\begin{align*}
\deg_{\alpha}(F_H \circ g) &\geq \sum_{p = 1}^{\delta_P-1} \alpha_{j_p} + \alpha_{N-1} + (d - \delta_P) \alpha_N\\
&\geq (\delta_P-1) \alpha_0 + \alpha_{N-1} + (d - \delta_P) \alpha_N\\
&\geq (d - 2 \delta_P + 1) \alpha_N - (\delta_P - 1) \sum_{i = 1}^{N-2} \alpha_i - (\delta_P - 2) \alpha_{N-1}.
\end{align*}
This shows \eqref{alpha deg loc sing non cone} and concludes the proof.
\end{proof}

\section{$\alpha$-degree and dimension of the singular locus:  Benoist's lower bound}\label{secBenoist}

\subsection{} The following result, due to Benoist \cite[Lemme 3.2]{Benoist14}, relates the $\alpha$-degree $\deg_\alpha (F_H \circ g)$ in the Hilbert-Mumford criterion \eqref{eqHM} and the dimension of the singular locus $H_{\mathrm{sing}}$  of $H$. 

\begin{proposition}
\label{alpha sing}
Let $g$ be an element of $\GL_{N+1}(k)$, $\alpha = (\alpha_0,\dots,\alpha_N)$ a multi-index in $\Z^{N+1} \setminus \{0\}$ such that: $$\alpha_0 \leq \dots \leq \alpha_N$$ and: $$\sum_{i = 0}^N \alpha_i = 0,$$
 and $(u,v, s)$ an element of $\N^3$ such that: $$u+v+s = N.$$
 
  If the following inequality holds:
\begin{equation}
\label{ineq cond alpha sing}
\deg_{\alpha}(F_H \circ g) < \alpha_u + (d-1)\alpha_v, 
\end{equation}
then the dimension of the singular locus $H_{\mathrm{sing}}$  of $H$ satisfies the following lower bound:
\begin{equation}
\label{ineq dim sing}
\dim(H_{\mathrm{sing}}) \geq \dim\big(H_{\mathrm{sing}} \cap ((g \cdot X )_0 = \dots = (g \cdot X )_{v-1} = 0)\big) \geq s,
\end{equation}
where $(g\cdot X)_i$ denotes the linear form $\sum_{j=0}^N g_{ij}X_j.$ 
\end{proposition}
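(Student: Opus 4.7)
The plan is to transport the hypersurface $H$ back to the standard setting via $g$, perform a monomial-level analysis showing that certain partial derivatives vanish on the relevant linear subspace, and then invoke Euler's identity together with Krull's height theorem. First, I would set $F := F_H \circ g$, so that $V(F) = g^{-1}(H)$ and $V(F)_{\mathrm{sing}} = g^{-1}(H_{\mathrm{sing}})$; under the automorphism $g$, the linear subspace $L$ defined by the forms $((g\cdot X)_i)_{0 \leq i < v}$ is identified with the standard coordinate subspace $L_0 := (X_0 = \cdots = X_{v-1} = 0) \simeq \PP^{N-v}_k$. The first inequality in \eqref{ineq dim sing} is immediate, and after this reduction the second becomes the claim $\dim(V(F)_{\mathrm{sing}} \cap L_0) \geq s$.

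The heart of the argument is the following claim: \emph{for every $i \geq u$, the restriction $\partial_i F |_{L_0}$ vanishes identically}. Arguing by contradiction, suppose some monomial $X^m = X_0^{m_0} \cdots X_N^{m_N}$ of $F$ contributes a nonzero term to $\partial_i F|_{L_0}$. This forces $m_i \geq 1$ together with $m_j = 0$ for every $j < v$ distinct from $i$ (and $m_i = 1$ in the sub-case $i < v$). Using $\sum_j m_j = d$ and the monotonicity $\alpha_0 \leq \cdots \leq \alpha_N$, a direct computation treating the sub-cases $i < v$ and $i \geq v$ separately yields
\[
\deg_\alpha X^m \;\geq\; \alpha_i + (d-1) \alpha_v,
\]
the point being that apart from one occurrence of $X_i$, the remaining $d-1$ factors are supported on indices $\geq v$ and thus each contributes at least $\alpha_v$ to the $\alpha$-degree. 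Since $i \geq u$ gives $\alpha_i \geq \alpha_u$, this contradicts the hypothesis $\deg_\alpha F < \alpha_u + (d-1)\alpha_v$.

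To conclude, I would apply Euler's identity $d F = \sum_{i=0}^N X_i \,\partial_i F$ and restrict both sides to $L_0$. The terms with $i < v$ vanish because $X_i|_{L_0} = 0$, and the terms with $i \geq u$ vanish by the previous step, leaving
\[
d \cdot F|_{L_0} \;=\; \sum_{v \leq i < u} X_i \cdot \partial_i F|_{L_0}.
\]
Hence $F|_{L_0}$ belongs to the ideal of the homogeneous coordinate ring of $L_0$ generated by $\partial_0 F|_{L_0}, \dots, \partial_{u-1} F|_{L_0}$, and consequently $V(F)_{\mathrm{sing}} \cap L_0$ is the zero locus in $L_0 \simeq \PP^{N-v}_k$ of these $u$ homogeneous forms. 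Krull's height theorem then gives $\dim(V(F)_{\mathrm{sing}} \cap L_0) \geq (N-v) - u = s$, as required. I expect the delicate point to be the monomial bound $\deg_\alpha X^m \geq \alpha_i + (d-1)\alpha_v$, which forces one to juggle the two sub-cases depending on the position of $i$ relative to $v$; the appeal to Euler is innocuous provided $d$ is invertible in $k$.
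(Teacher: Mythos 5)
Your key claim --- that $\partial_{X_i}F$ vanishes identically on $L_0 := (X_0 = \dots = X_{v-1} = 0)$ for every $i \geq u$ --- is correct, and the monomial bound $\deg_\alpha X^m \geq \alpha_i + (d-1)\alpha_v$ is exactly the right computation; it is the same mechanism as Lemma \ref{alpha sing pol depends} in the paper's proof. The concluding appeal to the projective dimension theorem also matches the paper. The genuine gap is the step you yourself flag: the Euler identity. The proposition is stated over an arbitrary algebraically closed field $k$ with no hypothesis relating $\mathrm{char}(k)$ and $d$ (and characteristic-independence is one of the selling points of the whole note). When $\mathrm{char}(k)$ divides $d$, Euler's identity reads $\sum_i X_i \partial_{X_i}F = 0$ and gives no control on $F|_{L_0}$; you are then left with the $u+1$ equations $F|_{L_0} = \partial_{X_0}F|_{L_0} = \dots = \partial_{X_{u-1}}F|_{L_0} = 0$ on $L_0 \simeq \PP^{N-v}_k$, which only yields dimension at least $s-1$. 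So the argument as written does not prove the stated proposition.

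The gap is repairable. The paper's route is to replace Euler by the decomposition $F = \sum_{i=0}^N X_i P_i$ with $P_i$ homogeneous of degree $d-1$ in the variables $X_i,\dots,X_N$ only --- a characteristic-free surrogate for the Euler relation --- and to show, by the same monomial estimate as yours, that for $i \geq u$ every monomial of $P_i$ is divisible by some $X_j$ with $j < v$; the scheme $Z = (X_0 = \dots = X_{v-1} = P_0 = \dots = P_{u-1} = 0)$, cut out by $u+v$ equations, is then checked to lie in $H_{\mathrm{sing}} \cap L_0$. That verification needs $u \leq v$, and the case $u > v$ is reduced to it by exchanging $u$ and $v$ (using $\alpha_{v'} + (d-1)\alpha_{u'} \leq \alpha_{u'} + (d-1)\alpha_{v'}$ when $\alpha_{u'} \leq \alpha_{v'}$). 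Alternatively, staying within your framework: when $u \leq v$, any monomial of $F$ surviving restriction to $L_0$ has $\alpha$-degree at least $d\alpha_v \geq \alpha_u + (d-1)\alpha_v$, so $F|_{L_0} = 0$ outright with no Euler identity, and the case $u > v$ again follows by the same swap. Either repair restores full generality; where it does apply, your Euler argument has the minor virtue of avoiding the $u \leq v$ case distinction altogether.
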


In \eqref{ineq dim sing}, 
when the scheme $H_{\mathrm{sing}}$ or $H_{\mathrm{sing}} \cap ((g \cdot X )_0 = \dots = (g \cdot X )_{v-1} = 0)$ is empty, its dimension is understood  to be  $-1.$ Consequently when this is the case, condition \eqref{ineq cond alpha sing} cannot be satisfied.

For the convenience of the reader, we provide some details on the proof of Proposition \ref{alpha sing}.

\begin{proof}
Replacing $H$ by $g^{-1}(H)$, we can assume that $g$ is the identity matrix.

The polynomial $F_H$ admits a unique decomposition of the form:
$$
F_H = X_0 P_0 + \dots + X_N P_N,
$$
where for every integer $i$ such that $0 \leq i \leq N$, $P_i$ is a homogeneous polynomial of degree $d-1$ in the indeterminates $X_i,\dots,X_N$.

The remainder of the proof of Proposition \ref{alpha sing} shall rely on the following lemmas.

\begin{lemma}
\label{alpha sing pol depends} Assume that the following inequality holds:
\begin{equation}\label{ineq cond alpha sing bis} \deg_{\alpha} F_H  < \alpha_u + (d-1)\alpha_v.
\end{equation}
Then for every integer $i \in \{u, \dots, N\}$, every monomial appearing in $P_i$ with a nonzero coefficient is divisible by some $X_j$ with $0\leq j < v$. 
\end{lemma}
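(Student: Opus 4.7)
The plan is to argue by contrapositive: assume some monomial $M$ occurs with nonzero coefficient in $P_i$ for some $i \in \{u, \ldots, N\}$, with $M$ not divisible by any $X_j$ for $j < v$, and derive the lower bound $\deg_\alpha F_H \geq \alpha_u + (d-1)\alpha_v$, contradicting \eqref{ineq cond alpha sing bis}.

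The decomposition $F_H = \sum_{i=0}^N X_i P_i$ is set up so that each monomial $X_i M'$ with $M'$ appearing in $P_i$ is divisible by $X_i$ but not by any $X_0, \ldots, X_{i-1}$ (this being what pins $P_i$ down as a polynomial in $X_i, \ldots, X_N$, and ensures no cancellations between the different summands $X_i P_i$). In particular, the monomial $X_i M$ appears in $F_H$ with the same nonzero coefficient as $M$ in $P_i$, giving the bound
\[
\deg_\alpha F_H \;\geq\; \deg_\alpha(X_i M) \;=\; \alpha_i + \deg_\alpha M.
\]

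The heart of the proof is then a short case analysis, using that $\alpha_0 \leq \cdots \leq \alpha_N$. Since $M$ is a monomial of degree $d-1$ in $X_i, \ldots, X_N$ that is moreover not divisible by any $X_j$ with $j<v$, its variables all have index $\geq \max(i,v)$. If $i \geq v$, then $\deg_\alpha M \geq (d-1)\alpha_i \geq (d-1)\alpha_v$ and $\alpha_i \geq \alpha_u$ (as $i \geq u$), so $\alpha_i + \deg_\alpha M \geq \alpha_u + (d-1)\alpha_v$. If instead $u \leq i < v$, then $M$ uses only $X_v, \ldots, X_N$, so $\deg_\alpha M \geq (d-1)\alpha_v$, and again $\alpha_i \geq \alpha_u$, giving the same conclusion.

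There is no real obstacle here; the only subtlety to get right is the uniqueness of the decomposition $F_H = \sum X_i P_i$, which is what legitimizes the passage from a monomial in $P_i$ to a monomial of $F_H$ with the same (nonzero) coefficient, and then the careful splitting into the two sub-cases $i \geq v$ and $i < v$ to ensure that the variables appearing in $M$ indeed all have $\alpha$-weight at least $\alpha_v$.
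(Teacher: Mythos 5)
Your proof is correct and is essentially the paper's argument read contrapositively: both rest on the fact that a monomial $M$ of $P_i$ yields the monomial $X_iM$ of $F_H$, the bound $\alpha_i\geq\alpha_u$ for $i\geq u$, and the observation that if no $X_j$ with $j<v$ divides $M$ then $\deg_\alpha M\geq(d-1)\alpha_v$. The case split $i\geq v$ versus $i<v$ is harmless but unnecessary, since in either case the non-divisibility hypothesis already forces every variable of $M$ to have index at least $v$.
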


\begin{proof}[Proof of Lemma \ref{alpha sing pol depends}]
Let $i$ be an integer in  $\{u, \dots, N\}$, and let $X_0^{m_0} \dots X_N^{m_N}$ be a monomial appearing with nonzero coefficient in the homogeneous polynomial $P_i$, where $m_0,\dots,m_N$ are $N+1$ non-negative integers such that  $\sum_{j = 0}^N m_j =d-1$.

 Since the monomial $X_i \cdot (X_0^{m_0} \dots X_N^{m_N})$ appears in $F_H$ with a nonzero coefficient, the definition of the $\alpha$-degree and inequality \eqref{ineq cond alpha sing bis} imply the following inequalities:
$$
\alpha_i + \sum_{j = 0}^N m_j \alpha_j \leq \deg_{\alpha} F_H  < \alpha_u + (d-1) \alpha_v.
$$
Moreover, since $i \geq u$, the coefficient $\alpha_i$ is at least $\alpha_u$, and therefore  the following inequality holds:
\begin{equation}\label{sum m v}
\sum_{j = 0}^N m_j \alpha_j < (d-1) \alpha_v.
\end{equation}

Together with the implication:
$$j \geq v \Longrightarrow \alpha_j \geq \alpha_v$$
and the equality:
$$\sum_{j = 0}^N m_j =d-1,$$
the inequality \eqref{sum m v} implies the existence of an integer $j$ such that $0\leq j < v$ and that $m_j$ is nonzero. 
Then the monomial $X_0^{m_0} \dots X_N^{m_N}$ is divisible by $X_j$, as required.
\end{proof}

\begin{lemma}\label{Z sing}
Let us moreover assume that the inequality $u \leq v$
is satisfied.  The closed subscheme $Z$ in $\PP^N_k$ defined by the following $u+v$ equations:
$$
X_0 = \dots = X_{v-1} = P_0 = \dots = P_{u-1} = 0
$$
 is contained in  $H_{\mathrm{sing}} \cap (X_0 = \dots = X_{v-1} = 0)$.
 \end{lemma}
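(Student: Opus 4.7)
The plan is to exploit the decomposition $F_H = \sum_{j=0}^N X_j P_j$ together with Lemma~\ref{alpha sing pol depends} to show two things: that $F_H$ vanishes on $Z$, so that $Z \subset H$; and that every partial derivative $\partial F_H/\partial X_i$ vanishes on $Z$, which places $Z$ inside $H_{\mathrm{sing}}$. Combined with the obvious inclusion $Z \subset (X_0 = \dots = X_{v-1} = 0)$ built into the definition of $Z$, this will yield the stated conclusion.

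The first step, $F_H|_Z = 0$, reduces to showing that $P_j|_Z = 0$ for every $j$. I would split by index: for $j < u$, the equation $P_j = 0$ is one of the defining equations of $Z$; for $j \geq u$, Lemma~\ref{alpha sing pol depends} forces every monomial of $P_j$ to be divisible by some $X_l$ with $l < v$, and all such $X_l$ vanish on $Z$. Summing, $F_H|_Z = \sum_j X_j P_j|_Z = 0$.

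For the partials, I would use that each $P_j$ depends only on $X_j, \ldots, X_N$ to write
$$\frac{\partial F_H}{\partial X_i} = P_i + \sum_{j=0}^{i} X_j \, \frac{\partial P_j}{\partial X_i}.$$
The leading term $P_i$ vanishes on $Z$ by the previous step. In the sum, terms with $j < v$ vanish because $X_j|_Z = 0$; the only remaining contributions occur when $i \geq v$ and $v \leq j \leq i$, in which case $j \geq v \geq u$, so Lemma~\ref{alpha sing pol depends} again applies to $P_j$. Finally, since $i \geq v > l$ for any relevant index $l < v$, differentiating a monomial divisible by $X_l$ with respect to $X_i$ preserves the factor $X_l$, so $\partial P_j/\partial X_i$ itself vanishes on $Z$. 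Hence every partial derivative vanishes on $Z$, and together with $F_H|_Z = 0$ this gives $Z \subset H_{\mathrm{sing}}$.

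The main delicacy lies in keeping the index bookkeeping clean and, in particular, in invoking the hypothesis $u \leq v$ at the right place: without it, one could not guarantee that every $P_j$ with $v \leq j \leq i$ satisfies the hypothesis of Lemma~\ref{alpha sing pol depends}, and the derivative computation would break down. Once this is arranged, the inclusion $Z \subset H_{\mathrm{sing}} \cap (X_0 = \dots = X_{v-1} = 0)$ follows at once.
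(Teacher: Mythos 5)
Your proof is correct and follows essentially the same route as the paper: the same decomposition $F_H=\sum_j X_jP_j$, the same appeal to Lemma~\ref{alpha sing pol depends} for the indices $j\geq u$, and the same observation that differentiating with respect to $X_i$ with $i\geq v$ preserves divisibility by some $X_l$ with $l<v$. The only cosmetic difference is that you absorb the paper's case ``$P_j$ does not depend on $X_i$'' into the restriction of the sum to $j\leq i$, which is a harmless reorganization.
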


\begin{proof}[Proof of Lemma \ref{Z sing}]
Since, using Lemma \ref{alpha sing pol depends}, for every integer $i \in \{u, \dots, N\}$, every monomial with nonzero coefficient in $P_i$ is divisible by some $X_j$ with $0 \leq j < v$, and since every such $X_j$ vanishes on $Z$ by definition, we obtain that for every $i \in \{u, \dots, N\}$, the polynomial $P_i$ vanishes on $Z$.

Consequently, for every integer $i \in \{0, \dots, N \},$ the polynomial $P_i$ vanishes on $Z$, and therefore the sum: 
$$
F_H = \sum_{i = 0}^N X_i P_i
$$
vanishes on $Z$. This shows that $F_H$ vanishes on $Z$ and therefore that $Z$ is contained in $H$.

Now let us fix an integer $i_0$ such that $0 \leq i_0 \leq N$ and show that the partial derivative:
\begin{equation}
\label{decom partial Z} 
\partial_{X_{i_0}} F_H = P_{i_0} + \sum_{i = 0}^N X_i \partial_{X_{i_0}} P_i
\end{equation}
vanishes on $Z$. 

As previously observed, the polynomial $P_{i_0}$ always vanishes on $Z$.

Let $i$ be some integer such that $0 \leq i \leq N$. Let us show that the term indexed by $i$ in the right-hand side of \eqref{decom partial Z} vanishes on $Z$. 

If $i < v$, then by definition of $Z$, $X_i$ vanishes on $Z$, and so does $X_i \partial_{X_{i_0}} P_i$. 

If $i \geq v$ and $i_0 < v$, then $i_0 < i$, and by definition of the $(P_j)_j$, the polynomial $P_i$ does not depend on the indeterminate $X_{i_0}$, therefore the partial derivative $\partial_{X_{i_0}} P_i$ vanishes on $\PP^N_k$, and so does $X_i \partial_{X_{i_0}} P_i$. 

Finally, if $i \geq v$ and $i_0 \geq v$, then $i \geq u,$ and using Lemma \ref{alpha sing pol depends}, every monomial with nonzero coefficient in~$P_i$ is divisible by some $X_j$ where $0\leq j < v$. Since $i_0 \geq v$, this remains true after taking the partial derivative $\partial_{X_{i_0}}$, so by definition of $Z$, every monomial with nonzero coefficient in $\partial_{X_{i_0}} P_i$ vanishes on $Z$. Therefore the product $X_i \partial_{X_{i_0}} P_i$ vanishes on $Z$. 

So every term in the right-hand side of \eqref{decom partial Z} vanishes on $Z$, and therefore the partial derivative $\partial_{X_{i_0}} F$ vanishes on $Z$.

As this is true for every integer $i_0 \in \{0, \dots, N\}$, the subscheme $Z$ is contained in the singular subscheme $H_{\mathrm{sing}}$, hence by definition, it is contained in $H_{\mathrm{sing}} \cap (X_0 = \dots = X_{v-1} = 0)$.
\end{proof}

The subscheme $Z$  defined by $u+v$ equations of degree $1$ or $d-1 \geq 1$ 
in the projective space~$\PP^N_k$ is of dimension at least $N - (u+v) = s$. Therefore, according to Lemma \ref{Z sing}, if $u \leq v,$ the subscheme $H_{\mathrm{sing}} \cap (X_0 = \dots = X_{v-1} = 0)$ also has dimension at least $s$, which shows inequality \eqref{ineq dim sing}.

This concludes the proof when $u \leq v$. If $u > v$, we define $u' := v$, $v' := u$ so that $u' \leq v'$, $u'+v' = N-s$, and that using inequality \eqref{ineq cond alpha sing} for $u$ and $v$, the following inequality of integers holds:
$$
\deg_{\alpha} F_H  < \alpha_u + (d-1) \alpha_v = \alpha_{v'} + (d-1) \alpha_{u'} \leq \alpha_{u'} + (d-1) \alpha_{v'},
$$
where we have used the inequality $\alpha_{u'} \leq \alpha_{v'}$ and the fact that $d-1$ is at least $1$. Therefore inequality \eqref{ineq cond alpha sing} holds for $u'$ and $v'$, and the triple $(u',v', s)$ satisfies the hypotheses of the Proposition, so using the previous case, we obtain that inequality \eqref{ineq dim sing} holds for $(u',v')$, namely:
\begin{align*}
s &\leq \dim\big(H_{\mathrm{sing}} \cap (X_0 = \dots = X_{v'-1} = 0)\big) 
= \dim\big(H_{\mathrm{sing}} \cap (X_0 = \dots = X_{u-1} = 0) \big) \\
&\leq \dim\big(H_{\mathrm{sing}} \cap (X_0 = \dots = X_{v-1} = 0) \big),
\end{align*}
which shows inequality \eqref{ineq dim sing} in the case where $u > v$.
\end{proof}

\subsection{} We now spell out the corollaries of Benoist's lower bound \eqref{ineq dim sing} which we shall use for the proof of Theorem \ref{MainThm}. 

 To state them, we denote by $s$ the dimension of the singular locus of $H$:
$$s := \dim(H_{\mathrm{sing}}) \in \{-1,\dots,N-1\}.$$
It is equal to $-1$ if and only if $H$ is smooth.

\begin{corollary}
\label{alpha isol}
 Let $g$ be an element of $\GL_{N+1}(k)$, $\alpha = (\alpha_0,\dots,\alpha_N)$ a multi-index in $\Z^{N+1} \setminus \{0\}$ such that $\alpha_0 \leq \dots \leq \alpha_N$ and $\sum_{i = 0}^N \alpha_i = 0$, and $u,v$ two non-negative integers such that $u+v = N-s-1$. 
 The following inequality of integers holds:
\begin{equation}
\label{ineq alpha isol}
\deg_{\alpha}(F_H \circ g) \geq \alpha_u + (d-1) \alpha_v.
\end{equation}
\end{corollary}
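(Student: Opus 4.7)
The plan is to derive Corollary \ref{alpha isol} from Proposition \ref{alpha sing} by simple contraposition, with the index shift $s \mapsto s+1$ supplying exactly the right $u+v+s' = N$ relation.

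First I would set $s' := s+1$. Since by definition $s = \dim(H_{\mathrm{sing}}) \in \{-1,0,\dots,N-1\}$, we have $s' \in \{0,1,\dots,N\} \subset \N$. Moreover $u,v \in \N$ by hypothesis, and the arithmetic identity
\[
u + v + s' = (N-s-1) + (s+1) = N
\]
holds, so the triple $(u,v,s')$ satisfies all the numerical hypotheses of Proposition \ref{alpha sing}. The multi-index $\alpha$ and the element $g \in \GL_{N+1}(k)$ already satisfy the remaining hypotheses ($\alpha_0 \leq \dots \leq \alpha_N$, $\sum \alpha_i = 0$, $\alpha \neq 0$) by assumption.

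The main step is then contraposition. Suppose, for contradiction, that
\[
\deg_\alpha(F_H \circ g) < \alpha_u + (d-1)\alpha_v.
\]
Applying Proposition \ref{alpha sing} to the triple $(u,v,s')$ would yield
\[
\dim(H_{\mathrm{sing}}) \geq s' = s+1,
\]
which contradicts the definition $s = \dim(H_{\mathrm{sing}})$. Hence the strict inequality cannot hold, and the desired lower bound \eqref{ineq alpha isol} follows.

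There is no real obstacle here beyond checking that the shift $s' = s+1$ stays within $\N$, which is immediate from $s \geq -1$; the edge case $s = -1$ (that is, $H$ smooth) gives $s' = 0$ and is handled uniformly, with the convention on empty schemes recalled after Proposition \ref{alpha sing} ensuring the statement is non-vacuous. Thus the corollary is a one-line reformulation of Benoist's lower bound, tailored to the actual dimension of $H_{\mathrm{sing}}$ rather than to an auxiliary parameter.
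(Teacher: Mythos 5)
Your argument is correct and is exactly the paper's proof: the paper also deduces the corollary by applying Proposition \ref{alpha sing} to $s' = s+1 \in \{0,\dots,N\}$, the contrapositive of Benoist's bound then ruling out the strict inequality. You have merely spelled out the hypothesis-checking and the contraposition that the paper leaves implicit.
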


\begin{proof}
This is a direct consequence of Proposition \ref{alpha sing} applied to $s' = s+1 \in \{0,\dots, N\}$.
\end{proof}

\begin{corollary} 
\label{alpha isol 2} 
Let $g$ be an element of $\GL_{N+1}(k)$ and $\alpha = (\alpha_0,\dots,\alpha_N)$ a multi-index in $\Z^{N+1} \setminus \{0\}$ such that $\alpha_0 \leq \dots \leq \alpha_N$ and $\sum_{i = 0}^N \alpha_i = 0$. %
When $s \leq N-2$, the following inequality of integers holds:
\begin{equation}
\frac{ N-s-2}{d} \deg_{\alpha}(F_H \circ g) 
 \geq \sum_{i = 1}^{N-s-2} \alpha_i.
\end{equation}
\end{corollary}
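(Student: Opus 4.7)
The plan is straightforward: apply Corollary \ref{alpha isol} (Benoist's lower bound) to a well-chosen family of admissible pairs $(u,v)$ and sum the resulting inequalities. The key idea is that the denominator $d$ appearing in the statement will emerge from combining the coefficients $1$ and $d-1$ via a symmetrization of the index~$i$.

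First, we dispose of the edge case $s = N-2$, in which $N-s-2 = 0$ and both sides of the claimed inequality vanish identically; so we may assume $N-s-2 \geq 1$ in what follows.

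For each integer $i$ in the range $1 \leq i \leq N-s-2$, we plan to set $u := i$ and $v := N-s-1-i$. These are non-negative integers (since $i \leq N-s-2 < N-s-1$) summing to $N-s-1$, so Corollary \ref{alpha isol} applies and yields
$$
\deg_{\alpha}(F_H \circ g) \geq \alpha_i + (d-1)\, \alpha_{N-s-1-i}.
$$
Summing these $N-s-2$ inequalities, and observing that $i \mapsto N-s-1-i$ is an involution of $\{1, \dots, N-s-2\}$, so that
$$
\sum_{i=1}^{N-s-2}\alpha_{N-s-1-i} = \sum_{i=1}^{N-s-2}\alpha_{i},
$$
we obtain
$$
(N-s-2)\, \deg_{\alpha}(F_H \circ g) \geq \bigl(1 + (d-1)\bigr) \sum_{i=1}^{N-s-2}\alpha_i = d \sum_{i=1}^{N-s-2}\alpha_i,
$$
and the claim follows after dividing by $d$.

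I do not anticipate any real obstacle: the argument is essentially bookkeeping, combining $N-s-2$ applications of Corollary \ref{alpha isol}. The only mild subtlety is recognizing that the symmetrization $i \leftrightarrow N-s-1-i$ is exactly what makes the coefficients $1$ and $d-1$ on the right-hand sides combine into the factor $d$ that produces the denominator in the statement.
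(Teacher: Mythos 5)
Your proposal is correct and follows essentially the same route as the paper: summing Corollary \ref{alpha isol} over the pairs $(u,v)=(i,\,N-s-1-i)$ for $1\leq i\leq N-s-2$, using the involution $i\mapsto N-s-1-i$ to combine the coefficients $1$ and $d-1$ into $d$, and dividing by $d$. The explicit treatment of the degenerate case $s=N-2$ is a harmless addition.
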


\begin{proof}
Using Corollary \ref{alpha isol}, inequality \eqref{ineq alpha isol} holds for every pair $(u,v)$ of non-negative integers such that $u+v = N-s-1$, namely:
$$
\deg_{\alpha}(F_H \circ g) \geq \alpha_u + (d-1) \alpha_v.
$$
Taking the sum of these inequalities for $1 \leq u \leq N-s-2$, and $v := N-s-1-u$, the following inequalities hold:
\begin{align*}
\sum_{u = 1}^{N-s-2} \deg_{\alpha}(F_H \circ g) &\geq \sum_{\substack{1 \leq u \leq N-s-2 \\ v = N-s-1-u}} (\alpha_u + (d-1) \alpha_v)\\
&\geq \sum_{u = 1}^{N-s-2} \alpha_u + (d-1) \sum_{v = 1}^{N-s-2} \alpha_v\\
&\geq d \sum_{u = 1}^{N-s-2} \alpha_u.
\end{align*}
Dividing by $d > 1$ shows the result.
\end{proof}

\section{Proof of Theorem \ref{MainThm} and variants}\label{PfMTvar}

\subsection{}  We are now in position to complete the proof of Theorem \ref{MainThm}.

\subsubsection{} Let $g$ be an element of $\GL_{N+1}(k)$ and $(\alpha_0,\dots,\alpha_N)$ a multi-index in $\Z^{N+1} \setminus \{0\}$ such that $\alpha_0 \leq \dots \leq \alpha_N$ and $\sum_{i = 0}^N \alpha_i = 0$. According to Proposition \ref{degree and HM}, to establish the semistability (resp. the stability) of $H$, we have to show that, under the appropriate hypotheses, the $\alpha$-degree $\deg_{\alpha}(F_H \circ g)$ is non-negative (resp. positive).

Let us denote by $P$ the point $g([0 : \dots : 0 : 1])$ in $\PP^N(k)$. First observe that, if $P$ is not a point in the hypersurface $H$, then according to Proposition \ref{alpha deg loc} \eqref{alpha deg loc non H}, the $\alpha$-degree $\deg_{\alpha}(F_H \circ g)$ is positive.

In the remainder of the proof, we shall always assume that $P$ is contained in the hypersurface $H$, and denote by $\delta_P$ the multiplicity of $H$ at $P$.

If as before we denote by $s$  the dimension of the singular locus of $H$, and if the following inequality of integers holds:
$$
\sum_{i = 1}^{N-s-2} \alpha_i > 0,
$$
then $N-s-2 > 0$, and according to Corollary \ref{alpha isol 2}, the $\alpha$-degree $\deg_{\alpha}(F_H \circ g)$ is positive. 

From now on, we shall  assume that the following inequality holds:
\begin{equation}
\label{sum alpha negative}
\sum_{i = 1}^{N-s-2} \alpha_i \leq 0.
\end{equation}

\subsubsection{} Let us prove Part \eqref{HM isol mult} of Theorem \ref{MainThm}.

Using Proposition \ref{alpha deg loc} \eqref{alpha deg loc sing mult}, the following inequalities hold:
\begin{align}
\deg_{\alpha}(F_H \circ g) &\geq (d - 2 \delta_P) \alpha_N - \delta_P \sum_{i = 1}^{N-1} \alpha_i \nonumber \\ 
&\geq (d - 2 \delta_P) \alpha_N - \delta_P \sum_{i = 1}^{N-s-2} \alpha_i - \delta_P \sum_{i = \max(1, N-s-1)}^{N-1} \alpha_i \nonumber \\
\label{ineq HM for sing point mult isol 1} &\geq (d-2 \delta_P) \alpha_N - \delta_P \sum_{i = \max(1, N-s-1)}^{N-1} \alpha_i \\
\label{ineq HM for sing point mult isol 2} &\geq \Big (d-2 \delta_P - \big (N - \max(1, N-s-1) \big ) \delta_P \Big ) \alpha_N \\
\label{ineq HM for sing point mult isol 3} &\geq \big (d - \delta_P \min(N+1,s+3) \big ) \alpha_N,
\end{align}
where 
in \eqref{ineq HM for sing point mult isol 1} we applied inequality \eqref{sum alpha negative}, and in \eqref{ineq HM for sing point mult isol 2}, we applied the inequality $\alpha_i \leq \alpha_N$.

Consequently, if $d \geq \delta \min(N+1,s+3) \geq \delta_P \min(N+1,s+3)$ (resp. $d > \delta \min(N+1,s+3) \geq \delta_P \min(N+1, s+3)$), then the $\alpha$-degree $\deg_{\alpha}(F_H \circ g)$ is non-negative (resp. positive). 

This completes the proof of Part \eqref{HM isol mult}.

\subsubsection{} For the proof of Part \eqref{HM isol not cone} of Theorem \ref{MainThm}, let us assume that $N \geq 2$ and the projective tangent cone to~$H$ at any 
point where $H$ has multiplicity  $ \delta$ is not the cone over some hypersurface in a projective hyperplane of $\PP_k^{N-1}$. Recall that this may  hold only when~$\delta \geq 2,$ or equivalently when~$s \geq 0$.  

When the multiplicity $\delta_P$ of $H$ at $P$ satisfies $\delta_P < \delta$, inequality \eqref{ineq HM for sing point mult isol 3} still holds and shows that if $d \geq (\delta-1) \min(N+1,s+3) \geq \delta_P \min(N+1,s+3)$ (resp. $d > (\delta-1) \min(N+1,s+3) \geq \delta_P \min(N+1,s+3)$), then the $\alpha$-degree $\deg_{\alpha}(F_H \circ g)$ is non-negative (resp. positive). 

Now let us assume that the multiplicity of $H$ at $P$ is precisely $\delta$. By hypothesis, the projective tangent cone to $H$ at $P$ is not the cone over some hypersurface in a projective hyperplane of $\PP_k^{N-1}$, so using Proposition \ref{alpha deg loc}  \eqref{alpha deg loc sing non cone}, the following inequalities hold:
\begin{align}
\deg_{\alpha}(F_H \circ g) &\geq (d - 2 \delta + 1) \alpha_N - (\delta - 1) \sum_{i = 1}^{N-2} \alpha_i  - (\delta-2) \alpha_{N-1}\nonumber \\
\label{ineq HM for sing point not cone 1} &\geq (d - 2 \delta + 1) \alpha_N - (\delta - 1) \sum_{i = \max(1, N-s-1)}^{N-2} \alpha_i - (\delta-2) \alpha_{N-1} \\
\label{ineq HM for sing point not cone 2} &\geq \Big (d - 2 \delta + 1 - (\delta-1) \big (N-1-\max(1, N-s-1) \big) - (\delta-2)  \Big ) \alpha_N  \\
&\geq \big (d - (\delta-1) \min(N+1,s+3) \big ) \alpha_N, \nonumber
\end{align}
where as before, in \eqref{ineq HM for sing point not cone 1} we applied inequality \eqref{sum alpha negative}, and in \eqref{ineq HM for sing point not cone 2} we applied the inequality $\alpha_i \leq \alpha_N$.

Consequently, if $d \geq (\delta-1) \min(N+1,s+3)$ (resp. $d > (\delta-1) \min(N+1,s+3)$), then the $\alpha$-degree $\deg_{\alpha}(F_H \circ g)$ is non-negative (resp. positive). 

This concludes the proof of  Part \eqref{HM isol not cone}.

\subsection{Variants of Theorem \ref{MainThm}}\label{subsecVar}

The methods used to prove Theorem \ref{MainThm} are somewhat flexible. In this subsection, we discuss two variants of this theorem that can be established with minor changes to the proof.

We adopt the notation of Theorem \ref{MainThm}.

\subsubsection{}\label{421} 

It follows from the use of Benoist's lower bound \eqref{ineq dim sing} in the proof of  Corollary \ref{alpha isol 2} --- and therefore in the proof of Theorem \ref{MainThm} ---  that, in the statement of this theorem, the integer $s$ may be replaced by the maximal dimension $s'$ of the intersection of $H_{\mathrm{sing}}$ with any hyperplane in $\PP^N_k$:
$$s' := \max \big\{ \dim(H_{\mathrm{sing}} \cap V); V  \mbox{ hyperplane in } \PP^N_k\big\}.$$

Observe that $s'$ is equal to $s-1$, unless some $s$-dimensional component of $H_{\mathrm{sing}}$ is contained in some hyperplane, in which case it is $s$. 

Accordingly, when no $s$-dimensional component of $H_{\mathrm{sing}}$ is contained in some hyperplane, we get a stronger version
of Theorem \ref{MainThm}, where for instance Part (1) involves the weaker numerical condition:
$$d \geq \delta \, \min(N+1,s+2).$$
This variant might be useful when dealing with reduced hypersurfaces with non-isolated singularities.

\subsubsection{}

In the proof of Theorem \ref{MainThm}, the application of Part \eqref{alpha deg loc sing non cone} of Proposition \ref{alpha deg loc} may be replaced by an application of Part \eqref{alpha deg loc sing non plane} of the same proposition. This leads to the following result.

\begin{theorem} 
\label{th HM mult}
If for every point $P \in H(k)$ where $H$ has multiplicity $\delta$, the support of the projective tangent cone $\PP(C_P H)$ in $\PP(T_P \PP^N_k) \simeq \PP^{N-1}_k$ is not a projective hyperplane, and if the following condition holds:
\begin{equation}\label{eq th HM mult}
d \geq (N+1) (\delta - 1) \quad (\mbox{resp. } d > (N+1) (\delta - 1)),
\end{equation}
then $H$ is semistable (resp. stable).
\end{theorem}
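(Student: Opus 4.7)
The proof is a mimicry of the argument for Part \eqref{HM isol not cone} of Theorem \ref{MainThm}, with two modifications: each use of Part \eqref{alpha deg loc sing non cone} of Proposition \ref{alpha deg loc} is replaced by an appeal to Part \eqref{alpha deg loc sing non plane} (legitimate because the stronger hypothesis on the tangent cone is assumed here), and the $s$-dependent refinement via Corollary \ref{alpha isol 2} may be dropped entirely: only the elementary bound $\alpha_i \leq \alpha_N$ will be needed.

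Concretely, by Proposition \ref{degree and HM} it suffices, given $g \in \GL_{N+1}(k)$ and a nonzero multi-index $\alpha = (\alpha_0, \dots, \alpha_N) \in \Z^{N+1}$ with $\alpha_0 \leq \dots \leq \alpha_N$ and $\sum_i \alpha_i = 0$ (so $\alpha_N > 0$), to prove that $\deg_\alpha(F_H \circ g) \geq 0$ (resp.\ $> 0$). Set $P := g([0:\dots:0:1])$, and let $\delta_P$ denote the multiplicity of $H$ at $P$, with the convention $\delta_P = 0$ when $P \notin H$. If $\delta_P = 0$, Part \eqref{alpha deg loc non H} of Proposition \ref{alpha deg loc} settles the matter. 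If $1 \leq \delta_P < \delta$, Part \eqref{alpha deg loc sing mult} combined with $\alpha_i \leq \alpha_N$ yields $\deg_\alpha(F_H \circ g) \geq [d - \delta_P(N+1)]\alpha_N$. If $\delta_P = \delta$, the hypothesis on the tangent cone at $P$ permits the application of Part \eqref{alpha deg loc sing non plane}, and once again bounding each intermediate $\alpha_i$ by $\alpha_N$ (together with the algebraic identity $(d - 2\delta + 1) - (\delta - 2) - (\delta - 1)(N - 2) = d - (N+1)(\delta - 1)$) delivers $\deg_\alpha(F_H \circ g) \geq [d - (N+1)(\delta - 1)]\alpha_N$. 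In all three cases, the hypothesis on $d$ makes the right-hand side non-negative (resp.\ positive), and the theorem follows.

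The argument is essentially a rerun of one already worked out in detail, so no real obstacle is anticipated; the only point calling for attention is verifying that after replacing each intermediate $\alpha_i$ by its upper bound $\alpha_N$ the three contributions combine exactly to $d - (N+1)(\delta - 1)$, which is the one-line algebraic check just indicated.
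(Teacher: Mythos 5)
Your proposal is correct and is essentially the paper's own (only sketched) proof: rerun the argument for Part \eqref{HM isol not cone} of Theorem \ref{MainThm} with Part \eqref{alpha deg loc sing non plane} of Proposition \ref{alpha deg loc} in place of Part \eqref{alpha deg loc sing non cone}, and the algebra $(d-2\delta+1)-(\delta-2)-(\delta-1)(N-2)=d-(N+1)(\delta-1)$ indeed checks out (note that the bound $-(\delta-2)\alpha_1\geq -(\delta-2)\alpha_N$ uses $\delta\geq 2$, which is automatic whenever the tangent-cone hypothesis is non-vacuous). Dropping the refinement via Corollary \ref{alpha isol 2} is harmless, since the stated numerical condition $(N+1)(\delta-1)$ carries no dependence on $s$.
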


Unlike the numerical conditions \eqref{eq HM isol mult} and \eqref{eq HM isol not cone} appearing in Theorem \ref{MainThm}, the condition \eqref{eq th HM mult} in Theorem \ref{th HM mult} is not more general when the dimension $s$ of the singular locus of $H$ is small, but Theorem \ref{th HM mult} involves a somewhat more general condition than Theorem \ref{MainThm} on the geometry of the singularities of maximal multiplicity $\delta$.

The criterion in Theorem \ref{th HM mult} applies only when $\delta \geq 2$, and provides a proof of the (semi)stability of some singular hypersurfaces of large degree that relies only on our preliminary estimates in Proposition \ref{alpha deg loc}.

\section{Appendix: Lee's criterion of semistability in terms of  log canonical threshold}\label{LeeLC}

\subsection{Lee's local reinterpretation of the Hilbert-Mumford criterion}
\label{HM weight Lee}

An approach, introduced by Lee \cite{Lee08}  in characteristic zero, then extended by Okawa \cite{Okawa11} to positive characteristic, consists in a reinterpretation of the Hilbert-Mumford criterion in terms of the singularity of pairs attached to the local germs of the hypersurface $H$ in $\PP^N_K$; see \cite{Kollar97B}. 

Let us recall Lee's formulation of the Hilbert-Mumford criterion, changing slightly the numerology to make it compatible with the one in \cite{Benoist14}.

If 
$$
f(x_0,\dots,x_{N-1}) = \sum_{(m_0,\dots,m_{N-1}) \in \N^N} \lambda_{m_0,\dots,m_{N-1}} x_0^{m_0} \dots x_{N-1}^{m_{N-1}}
$$
is a nonzero formal series in $N$ indeterminates and if 
$$w = (w_0,\dots,w_{N-1})$$
is a multi-index of $N$ positive integral (or rational) weights, 
we define the weighted multiplicity of the series $f$ to be the integer:
$$
\mathrm{mult}_w(f) := \mathrm{min}_{\substack{(m_0,\dots,m_{N-1}) \in \N^N \\ \lambda_{m_0,\dots,m_{N-1}} \neq 0}} \sum_{i = 0}^{N-1} m_i w_i.
$$
This integer is $0$ precisely when $f$ does not vanish at the origin.

Let $P$ be a point in  $H(k)$. We define a non-negative real number $I_P(\PP^N_k, H)$ associated to the germ of $H$ at $P$ by:
$$
I_P(\PP^N_k, H) := \mathrm{inf}_{g, w} \frac{\sum_{i = 0}^{N- 1} w_i}{\mathrm{mult}_w\big((F_H \circ g)(X_0,\dots,X_{N-1},1)\big)},
$$
where the infimum is over the element $g \in \GL_{N+1}(k)$ such that $P = g([0 : \dots : 0 : 1])$ and over the multi-index $w$ of positive integral weights.

A simple continuity argument shows that this infimum has the same value as the infimum over every multi-index $w$ of non-negative (integral or rational) weights such that the weighted multiplicity:
$$\mathrm{mult}_w\big((F_H \circ g)(X_0,\dots,X_{N-1},1)\big)$$
is positive.

We can now define the non-negative real number:
$$I(\PP_k^N, H) := \inf_{P \in H(k)} I_P(\PP^N_k, H).$$

\begin{proposition}[{\cite[Lemma 2.1]{Lee08} in characteristic $0$, see also for instance \cite[Lemma 2.7]{Okawa11} in positive characteristic}]
\label{HM weights}
The hypersurface $H$ is semistable (resp. stable) if and only if $I(\PP^N_k, H) \geq \frac{N+1}{d}$ (resp. $I(\PP^N_k, H) > \frac{N+1}{d}$).
\end{proposition}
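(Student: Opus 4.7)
The plan is to translate the Hilbert--Mumford criterion of Proposition \ref{degree and HM} directly into Lee's condition $I(\PP^N_k, H) \geq (N+1)/d$ by establishing a precise dictionary between the pairs $(g,\alpha)$ featuring in the former and the pairs $(g, w)$ featuring in the latter.

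The key algebraic step I would prove first is the identity
$$
\deg_\alpha(F_H \circ g) \;=\; d\,\alpha_N \;-\; \mathrm{mult}_w(f),
$$
where $f(x_0,\dots,x_{N-1}) := (F_H \circ g)(X_0,\dots,X_{N-1},1)$ is the dehomogenization, $\alpha = (\alpha_0,\dots,\alpha_N) \in \Z^{N+1}\setminus\{0\}$ satisfies $\sum_i \alpha_i = 0$ and $\alpha_0 \leq \dots \leq \alpha_N$ (an ordering that may be assumed after composing $g$ with a permutation matrix), and $w := (\alpha_N - \alpha_0, \dots, \alpha_N - \alpha_{N-1}) \in \N^N$. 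The proof is a direct calculation: each monomial $X^m$ of $F_H \circ g$ with $|m| = d$ dehomogenizes to $x_0^{m_0}\dots x_{N-1}^{m_{N-1}}$ and, using $m_N = d - \sum_{i<N} m_i$, satisfies $\deg_\alpha X^m = d\alpha_N - \sum_{i=0}^{N-1} m_i w_i$; the maximum over monomials of $F_H \circ g$ with nonzero coefficient then corresponds to the minimum on the right-hand side, which is precisely $\mathrm{mult}_w(f)$ by definition.

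Since $\sum_{i=0}^{N-1} w_i = (N+1)\alpha_N$ and $\alpha_N > 0$ (both being consequences of $\alpha \neq 0$ and $\sum_i \alpha_i = 0$), this identity rewrites the HM inequality $\deg_\alpha(F_H \circ g) \geq 0$ (resp.\ $>0$) as
$$
\frac{\sum_{i=0}^{N-1} w_i}{\mathrm{mult}_w(f)} \;\geq\; \frac{N+1}{d} \qquad \Big(\text{resp.\ } > \frac{N+1}{d}\Big),
$$
at least when $P := g([0:\dots:0:1])$ lies in $H$ and so $f$ vanishes at the origin with $\mathrm{mult}_w(f) > 0$. When $P \notin H$, Proposition \ref{alpha deg loc}\eqref{alpha deg loc non H} already yields $\deg_\alpha(F_H \circ g) \geq d\alpha_N > 0$ automatically, matching the fact that such $g$ are excluded from the definition of $I(\PP^N_k, H)$. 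Conversely, any tuple $w$ of positive rational weights, rescaled so that $\sum_i w_i \in (N+1)\Z$, recovers an admissible $\alpha$ via $\alpha_N := \sum_j w_j/(N+1)$ and $\alpha_i := \alpha_N - w_i$; since the ratio $\sum w_i / \mathrm{mult}_w(f)$ is invariant under scaling of $w$, this sets up a scale-invariant bijection. Combined with Proposition \ref{degree and HM}, this yields the claimed equivalence for semistability and, in a parallel manner, for stability.

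The main obstacle I foresee is the strict inequality case, where one must verify that \emph{every} pair $(g,w)$ giving a ratio $>(N+1)/d$ forces the infimum $I(\PP^N_k, H)$ to be itself strictly greater than $(N+1)/d$. This can be handled by noting that, for fixed $g$, the weighted multiplicity $\mathrm{mult}_w(f)$ is the minimum of finitely many linear forms in $w$ (since $f$ has finitely many monomials), so on the compact simplex of normalized non-negative weights the ratio $\sum w_i / \mathrm{mult}_w(f)$ is continuous, piecewise rational, and attains its minimum at a rational vertex; likewise, as $P$ varies in $H(k)$, the combinatorial type of the germ of $H$ at $P$ is constructible, so the infimum of $I_P$ over $P \in H(k)$ is also attained. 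Putting these continuity remarks together with the dictionary above concludes the proof.
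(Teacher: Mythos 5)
Your dictionary is exactly the identity that the paper records immediately after Proposition \ref{HM weights} as ``the link'' with Proposition \ref{degree and HM}: since $\sum_{i=0}^{N-1} w_i = (N+1)\alpha_N$ whenever $\sum_i \alpha_i = 0$, your formula $\deg_\alpha(F_H\circ g) = d\,\alpha_N - \mathrm{mult}_w(f)$ coincides with the paper's $\tfrac{d}{N+1}\sum_i w_i - \mathrm{mult}_w(f)$. (The paper does not actually prove this proposition --- it cites Lee and Okawa and only states the identity --- so you are supplying more detail than the source text.) Your treatment of the semistable equivalence is complete and correct, including the side cases $P\notin H$ and $\mathrm{mult}_w(f)=0$, and the rescaling that turns an arbitrary positive rational weight vector into an admissible integral $\alpha$.

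The stable direction, however, has a genuine gap. Stability yields the strict inequality $\sum_i w_i/\mathrm{mult}_w(f) > (N+1)/d$ for each \emph{individual} admissible triple $(P,g,w)$, but $I(\PP^N_k,H)$ is an infimum over an infinite family, so you must show that this infimum is attained (or at least bounded away from $(N+1)/d$). Your compactness argument correctly handles the infimum over $w$ for a \emph{fixed} $g$: maximizing the concave piecewise-linear function $\min_m \sum_i m_i w_i$ over the weight simplex is a linear program with a rational optimum at which the weighted multiplicity is positive. But the appeal to ``the combinatorial type of the germ of $H$ at $P$'' does not address the remaining infimum over the infinitely many $g\in\GL_{N+1}(k)$ with $g([0:\dots:0:1])=P$: the germ of $H$ at $P$ does not determine the Newton polytope of $(F_H\circ g)(X_0,\dots,X_{N-1},1)$, which genuinely depends on $g$ and not only on $P$. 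The correct finiteness statement is simpler and disposes of both remaining infima at once: the support of $F_H\circ g$ is a subset of the fixed finite set $\{m\in\N^{N+1} : \vert m\vert = d\}$, so it takes only finitely many values as $g$ ranges over $\GL_{N+1}(k)$, and $\inf_w \sum_i w_i/\mathrm{mult}_w(f)$ depends only on this support. Hence $I(\PP^N_k,H)$ is the minimum of finitely many rational values, each attained by an admissible pair $(g,\alpha)$, and stability does force $I(\PP^N_k,H) > \tfrac{N+1}{d}$. With this replacement for your attainment argument, the proof is complete.
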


The link between Proposition \ref{HM weights} and the Hilbert-Mumford criterion as given in Proposition \ref{degree and HM} is given by the following equality, where $g$ is an element of $\GL_{N+1}(k)$, where $\alpha = (\alpha_0,\dots,\alpha_N)$ is a multi-index in $\Z^{N+1}$ such that $\vert \alpha \vert = 0$ and $\alpha_0 \leq \dots \leq \alpha_N$, and where $w = (w_0,\dots,w_{N-1})$ is the multi-index in $\Z_{\geq 0}^N$ given by, for every $i$, $w_i = \alpha_N - \alpha_i$:
$$\deg_\alpha(F_H \circ g) = \frac{d}{N+1} \sum_{i = 0}^{N-1} w_i - \mathrm{mult}_w\big((F_H \circ g)(X_0,\dots,X_{N-1},1)\big).$$

\subsection{Log canonical and log terminal pairs}

Let us recall the definition and elementary properties of log canonical pairs.\footnote{See \cite{Kollar97B}, mainly \S3, and \cite[2.3]{KollarMori98}, that we closely follow,  for more details.}

Let $(X,D)$ be a pair where $X$ is a normal integral algebraic $k$-scheme, and where~$D= \sum_i a_i D_i$ is a Weil $\Q$-divisor, written as a sum of distinct prime divisors, with coefficients $a_i$ in $\Q$. 

We denote by $K_X$ a Weil divisor in $X$ that extends the Cartier divisor in the regular locus $X_{\mathrm{reg}}$ of $X$ attached to a nonzero rational section of the canonical line bundle $\omega_{X_{\mathrm{reg}}}$, and we assume that~$m(K_X + D)$ is Cartier for some positive integer~$m$.

Consider a proper birational morphism:
\[
\nu : X' \lra X
\]
where $X'$ is a normal integral algebraic $k$-scheme.  Let us denote by $\mathrm{Ex}$ the exceptional locus of~$\nu$ in~$X'$, and by~$E_j$ the integral exceptional divisors, namely the components of  $\mathrm{Ex}$ of codimension~$1$. Consider also the strict transform of $D$ in $X'$, namely the Weil $\Q$-divisor in $X'$:
$$\nu_\ast^{-1} D := \sum_i a_i \, \nu_\ast^{-1} D_i.$$

The two line bundles:
$$\cO_{X'}\big(m(K_{X'} + \nu_\ast^{-1} D)\big)_{\mid X' \setminus \mathrm{Ex}} 
\quad \mbox{and} \quad \nu^\ast \cO_X \big(m(K_X + D)\big)_{\mid X' \setminus \mathrm{Ex}}$$
are naturally isomorphic. Therefore there exist rational numbers $a(E_j,  X, D)$ in $(1/m) \Z$ such that this isomorphism extends to an isomorphism:
$$\cO_{X'}\big(m(K_{X'} + \nu_\ast^{-1} D)\big) \simeq \nu^\ast \cO_X \big(m(K_X + D)\big)
\Big( \sum_j m \, a(E_j, X, D) \, E_j \Big). $$

If $E$ is any prime divisor in $X'$, we define its \emph{discrepancy} $a(E,X,D)$ to be $a(E_j, X, D)$ when $E$ is $E_j$, to be $-a_i$ when $E$ is the strict transform $\nu_\ast^{-1} D_i$ of $D_i$, and to be $0$ in other cases.

The  discrepancy $a(E,X,D)$ only depends on the local ring of $X'$ at the generic point of $E$, seen as a discrete valuation ring in the fraction field of $X$. In other words, if $X'' \rightarrow X'$ is another proper birational morphism, where $X''$ is an integral normal algebraic $k$-scheme, 
then the discrepancy of the strict transform of~$E$ in~$X''$ is the same as the discrepancy of~$E$.

The pair $(X,D)$ is called \emph{log canonical} (resp. \emph{log terminal}) if for every proper birational morphism $X' \rightarrow X$ with $X'$ a normal integral algebraic $k$-scheme 
and for every integral exceptional  divisor $E$ in~$X'$, the following inequality holds:
\begin{equation}\label{discrep lc}
a(E,X,D) \geq -1 \quad (\text{resp.} > -1).
\end{equation}

If one of the components $D_i$ of $D$ has multiplicity $a_i > 1$, then  the pair $(X,D)$ is not log canonical.  Indeed, blowing up an integral closed subscheme of codimension 2 contained in $D_i$, then blowing up the intersection of the exceptional divisor and the strict transform of $D_i$, and then repeating this process, one constructs a sequence of exceptional divisors $(E_r)_{r > 0}$ with discrepancy $r (1 - a_i)$; this diverges toward $-\infty$ as $r$ goes to~$+\infty$.

As the discrepancy of a strict transform of a component of $D$ is the opposite of its multiplicity, the previous observation shows that the above definition of log canonicity is equivalent to the one  where the inequality $a(E,X,D) \geq -1$  is required to hold  for every integral divisor~$E$ in $X'$.

\subsection{The log canonical threshold of a $\mathbb{Q}$-divisor at a point}

Let us now recall the definition of the log canonical threshold of a $\mathbb{Q}$-divisor.\footnote{ See \cite{Kollar97B}, mainly \S8.}

With the notation of the previous subsection, the \emph{log canonical threshold} of~$D$ at a point~$P$ in~$\vert D \vert$ is defined as the supremum:
\[
\text{lct}_P(X,D) := \sup\big \{c \geq 0 \; | \; (X, c.D) \text{ is log canonical in a neighborhood of } P\big\}.
\]

If $D$ is an effective Cartier divisor, then letting $c > 1$ be a real number and $D_i$ be a component of $D$ with multiplicity $a_i \geq 1$, the following inequality holds:
\[
a(D_i, X, c . D) = - c \, a_i < - a_i \leq -1.
\]
Consequently, if $D$ is an effective Cartier divisor, then the following inequality holds:
$$\mathrm{lct}_P(X,D) \leq 1.$$

Moreover, we have the following upper bound for the log canonical threshold of an effective Cartier divisor in $\A^N_k$:

\begin{proposition}[{\cite[Proposition 8.13]{Kollar97B} in characteristic zero, \cite[Proposition 4.9]{Okawa11} in positive characteristic}] 
\label{weight log canon} 
Let $N$ be a positive integer, $f$ a nonzero regular function on the affine space~$\mathbb{A}^N_k$, and let $D$ be the Cartier divisor in $\mathbb{A}^N_k$ defined by the vanishing of~$f$. Let $w = (w_0, \ldots,w_{N-1})$ be a multi-index of $N$ positive integral weights. The following inequality holds:
$$
\lct_0(\mathbb{A}^N_k, D) \leq \min\left(1, \frac{\sum_i w_i}{\mathrm{mult}_w(f)}\right),
$$
where $\mathrm{mult}_w(f)$ is the weighted multiplicity of the function $f$ introduced in Subsection \ref{HM weight Lee}.
\end{proposition}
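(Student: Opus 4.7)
The bound $\lct_0(\A^N_k, D) \leq 1$ is immediate from the observation made just above the statement: since $D$ is an effective Cartier divisor, scaling it by any $c > 1$ produces a component with coefficient strictly greater than $1$, hence a divisor that is not log canonical. The substantive content is therefore the inequality
$$\lct_0(\A^N_k, D) \leq \frac{\sum_{i=0}^{N-1} w_i}{\mathrm{mult}_w(f)}.$$
The plan is to realize the right-hand side as $1+a(E,\A^N_k,c\cdot D) = 0$ condition, where $E$ is the exceptional divisor of a suitable weighted blow-up of the origin in $\A^N_k$.

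More precisely, I would consider the weighted blow-up $\pi\colon Y \lra \A^N_k$ of the origin with weights $w=(w_0,\dots,w_{N-1})$. This is the proper birational morphism whose affine charts glue the $k$-algebras generated by monomials $x_0^{m_0}\cdots x_{N-1}^{m_{N-1}} t^{-\sum m_i w_i}$ for suitable $t$. The scheme $Y$ is normal and integral (being covered by toric charts of the form $\mathrm{Spec}\, k[\sigma^\vee\cap M]$), so it is an admissible model for the discrepancy computation of the previous subsection, and the exceptional locus is an integral Cartier divisor $E$ on $Y$, isomorphic to the weighted projective space $\PP(w_0,\dots,w_{N-1})$.

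The two key numerical inputs are computed in the generic chart of $\pi$:
\begin{itemize}
\item The pull-back of the standard volume form $dx_0\wedge\cdots\wedge dx_{N-1}$ vanishes along $E$ to order $\big(\sum_i w_i\big)-1$, so that $a(E,\A^N_k,0) = \big(\sum_i w_i\big)-1$.
\item By the definition of $\mathrm{mult}_w(f)$, the function $f\circ\pi$ vanishes along $E$ to order exactly $\mathrm{mult}_w(f)$, so the strict transform $\nu_\ast^{-1} D$ and the exceptional $E$ are related by $\pi^\ast D = \nu_\ast^{-1} D + \mathrm{mult}_w(f)\, E$.
\end{itemize}
Combining these, for every rational $c\geq 0$ the discrepancy of $E$ in the pair $(\A^N_k,c\cdot D)$ equals
$$a(E,\A^N_k,c\cdot D) = \Bigl(\sum_{i=0}^{N-1} w_i\Bigr) - 1 - c\cdot\mathrm{mult}_w(f).$$
If $(\A^N_k,c\cdot D)$ is log canonical in a neighborhood of the origin (which is the image of $E$ under $\pi$), then this discrepancy must be $\geq -1$, forcing $c\leq \sum_i w_i/\mathrm{mult}_w(f)$. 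Taking the supremum over all such $c$ yields the desired inequality.

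The main obstacle is the justification of the two discrepancy computations above, which require a careful toric description of $\pi$ and the verification that $Y$ is a normal scheme on which the definition of discrepancy of the paper applies directly. If one wishes to avoid relying on the normal but singular scheme $Y$, one can instead pull back $E$ under a further resolution $\tilde Y \to Y$ and note that the discrepancy of $E$ is a birational invariant of its divisorial valuation, as recalled in the subsection on log canonical pairs. The characteristic-free nature of the weighted blow-up construction is what allows the argument (and hence Okawa's extension of Lee's result) to work in arbitrary characteristic.
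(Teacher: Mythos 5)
The paper gives no proof of this proposition --- it is quoted from Koll\'ar (Proposition 8.13) and Okawa (Proposition 4.9) --- and your weighted-blow-up argument is precisely the standard proof in those references: the bound $\leq 1$ comes from the effectivity of $D$, and the bound by $\sum_i w_i/\mathrm{mult}_w(f)$ comes from testing log canonicity of $(\A^N_k, c\cdot D)$ against the exceptional divisor $E$ of the weighted blow-up, whose discrepancy is $\sum_i w_i - 1 - c\,\mathrm{ord}_E(f)$ with $\mathrm{ord}_E(f)\geq \mathrm{mult}_w(f)$; this is correct and characteristic-free. The only detail worth recording is that when $\gcd(w_0,\dots,w_{N-1})>1$ the valuation $\mathrm{ord}_E$ is the monomial valuation $v_w$ divided by that gcd, but this rescales $a(E,\A^N_k,0)+1$ and $\mathrm{ord}_E(f)$ by the same factor (equivalently, one may assume the weights coprime, since the ratio $\sum_i w_i/\mathrm{mult}_w(f)$ is invariant under rescaling $w$), so the resulting bound on $c$ is unchanged.
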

 
Combining this result with Proposition \ref{HM weights}, Lee obtains the following result.

\begin{proposition}[{\cite[Proposition 2.5]{Lee08} in characteristic zero, \cite[Theorem 4.1]{Okawa11} in positive characteristic}] 
\label{log canon semistab}
If $H$ is a hypersurface of degree $d$ in the projective space $\mathbb{P}^N_k$, and if for every point $P$ in $H(k)$, the following inequality holds:
\begin{equation}
\label{ineq log canon semistab}
\lct_P(\mathbb{P}^N_k, H) \geq \frac{N+1}{d} \quad \Big(\text{resp. } > \frac{N+1}{d}\Big),
\end{equation}
then $H$ is semistable (resp. stable).

In particular, if the pair $(\PP^N_k, H)$ is log canonical and if $d \geq N+1$ (resp. $>N+1$), then $H$ is semistable (resp. stable).
\end{proposition}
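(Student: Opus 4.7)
The plan is to combine the two previously stated propositions, Proposition~\ref{HM weights} and Proposition~\ref{weight log canon}, via a local identification of the pair $(\PP^N_k, H)$ around a point $P \in H(k)$ with a pair $(\A^N_k, D_g)$ around the origin. By Proposition~\ref{HM weights}, it suffices to show that the pointwise hypothesis on the log canonical threshold implies the lower bound $I(\PP^N_k, H) \geq (N+1)/d$ (resp. strict).

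First, I would fix $P \in H(k)$ and $g \in \GL_{N+1}(k)$ with $g([0:\cdots:0:1]) = P$, and observe that the affine chart $(X_N \neq 0)$ identified with $\A^N_k$ via $(X_0,\dots,X_{N-1})$ carries, up to the coordinate change given by $g^{-1}$, the Cartier divisor $D_g$ defined by $f_g(X_0,\dots,X_{N-1}) := (F_H \circ g)(X_0,\dots,X_{N-1},1)$. Since the log canonical threshold is a local invariant preserved under local isomorphism of pairs, we have $\lct_P(\PP^N_k, H) = \lct_0(\A^N_k, D_g)$. Applying Proposition~\ref{weight log canon} to $f_g$ with any multi-index $w$ of positive integral weights yields
$$
\lct_P(\PP^N_k, H) \;\leq\; \frac{\sum_i w_i}{\mathrm{mult}_w(f_g)},
$$
and taking the infimum of the right-hand side over admissible pairs $(g,w)$ gives, by the very definition of $I_P$, the comparison $\lct_P(\PP^N_k, H) \leq I_P(\PP^N_k, H)$. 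Taking the infimum over $P \in H(k)$, the hypothesis $\lct_P \geq (N+1)/d$ for every $P$ forces $I(\PP^N_k, H) \geq (N+1)/d$, which by Proposition~\ref{HM weights} establishes semistability. The last assertion of the proposition then follows by noting that if $(\PP^N_k, H)$ is log canonical one has $\lct_P(\PP^N_k, H) \geq 1$ for every $P$, and combined with the general upper bound $\lct_P \leq 1$ for effective Cartier divisors one obtains $\lct_P = 1$; hence $d \geq N+1$ (resp. $d > N+1$) yields the required inequality and the first part applies.

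The only step that requires care is the passage from the pointwise \emph{strict} inequality $\lct_P > (N+1)/d$ to the strict global bound $I(\PP^N_k, H) > (N+1)/d$ needed for stability, since the infimum of strictly positive quantities over an infinite set is not automatically strict. I would handle this by invoking the lower semi-continuity of the function $P \mapsto \lct_P(\PP^N_k, H)$ on the quasi-compact scheme $H$, which ensures that the infimum is attained at some closed point of $H$, and therefore inherits the strict inequality. This semi-continuity step is the main (and only nontrivial) obstacle; the rest of the argument is essentially a formal combination of the definitions and the two input propositions stated above.
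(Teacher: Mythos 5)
Your argument is correct and is exactly the derivation the paper has in mind: the paper does not actually prove this proposition (it is quoted from Lee and Okawa), but the sentence preceding it says the result is obtained by combining Proposition \ref{weight log canon} with Proposition \ref{HM weights}, and your chain $\lct_P(\PP^N_k,H)=\lct_0(\A^N_k,D_g)\leq \frac{\sum_i w_i}{\mathrm{mult}_w(f_g)}$, hence $\lct_P\leq I_P$, is precisely that combination. The one point worth flagging is your treatment of the strict (stability) case: invoking lower semi-continuity of $P\mapsto\lct_P$ is legitimate in characteristic zero, but it is a nontrivial extra input, and in positive characteristic (where the proposition is also asserted, via Okawa) semicontinuity of the log canonical threshold is not an off-the-shelf fact. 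You can avoid it entirely by bypassing the global infimum $I(\PP^N_k,H)$ and arguing pointwise with Proposition \ref{degree and HM}: by the displayed identity
$$\deg_\alpha(F_H\circ g)=\tfrac{d}{N+1}\textstyle\sum_i w_i-\mathrm{mult}_w\big((F_H\circ g)(X_0,\dots,X_{N-1},1)\big),$$
the hypothesis $\lct_P>\frac{N+1}{d}$ gives $\deg_\alpha(F_H\circ g)>0$ for each individual pair $(g,\alpha)$ with $g([0:\dots:0:1])=P\in H$ (and Proposition \ref{alpha deg loc}~(1), or the identity with $\mathrm{mult}_w=0$, handles $P\notin H$), which is all that stability requires; no uniform lower bound on the ratio over all $(P,g,w)$ is needed.
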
 

Observe that since the log canonical threshold of a non-empty effective Cartier divisor is at most~1, inequality \eqref{ineq log canon semistab} cannot hold unless $d \geq N+1$ ($d > N+1$ for the strict variant).

The following result is useful for applications of this criterion.

\begin{proposition}[{\cite[Corollary 3.13]{Kollar97B}}]
\label{log canon on resol}
Let $D$ be a Weil $\mathbb{Q}$-divisor in a normal integral algebraic $k$-scheme~$X$ such that $K_X + D$ is $\mathbb{Q}$-Cartier. Let us consider a \emph{log resolution} of the pair $(X,D)$, namely a proper birational morphism
$$
\nu : X' \lra X
$$
with~$X'$ a connected smooth $k$-scheme such that 
the following divisor in~$X'$:
$$\nu^{-1}(D) \cup \bigcup_{\substack{E \text{ exceptional}\\ \text{ divisor in } X'}} E$$
is a divisor with simple normal crossings. The pair $(X,D)$ is log canonical if and only if the discrepancy of every  divisor in~$X'$ is~$\geq -1$.

\end{proposition}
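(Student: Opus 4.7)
The plan is to prove the two implications separately.

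The ``only if'' direction will be essentially tautological: if $(X,D)$ is log canonical then by definition $a(E,X,D)\geq -1$ for every exceptional divisor of $\nu$, while for the non-exceptional components $\nu^{-1}_\ast D_i$ the discrepancy equals $-a_i$, and the argument recalled in the excerpt (iterated blow-ups of codimension-$2$ subvarieties of $D_i$) forces $a_i\leq 1$ whenever $(X,D)$ is log canonical.

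For the substantial ``if'' direction, I would start by rewriting the hypothesis as $K_{X'}+\Sigma=\nu^\ast(K_X+D)$ with $\Sigma=\sum_j b_j E_j$ an SNC divisor on $X'$, so that the statement ``all discrepancies on $X'$ are $\geq -1$'' translates into $b_j\leq 1$ for every component. Given an arbitrary prime divisor $F$ on some proper birational normal model $X''\to X$, I would invoke the fact recalled in the excerpt that discrepancies depend only on the underlying divisorial valuation: this lets me pass to a common smooth birational model $Y$ dominating both $X'$ and $X''$, and reduces the problem to bounding below the discrepancies of every prime divisor on $Y$.

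The core of the argument would then be an induction along a sequence of blow-ups dominating $Y\to X'$. Invoking Hironaka-type resolution, I would arrange that $Y\to X'$ is dominated by a composition of blow-ups along smooth centers each transverse to the SNC boundary present at that stage. The single-step lemma is the classical discrepancy formula for the blow-up $\pi:\tilde W\to W$ of a smooth center $Z$ of codimension $c$ in a log smooth pair $(W,\Sigma=\sum_j b_jE_j)$, where $Z$ is contained in precisely $k$ of the components, say $E_{j_1},\dots,E_{j_k}$:
\[
a(E,W,\Sigma)=c-1-\sum_{\ell=1}^k b_{j_\ell}.
\]
The SNC hypothesis forces $k\leq c$, and combined with the inductive bound $b_{j_\ell}\leq 1$ this gives $a(E,W,\Sigma)\geq -1$. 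By transitivity of discrepancies along $\tilde W\to W\to X$ the same bound holds with respect to $(X,D)$, and the new boundary $\pi^{-1}_\ast\Sigma+\big(\sum_\ell b_{j_\ell}+1-c\big)E$ on $\tilde W$ still has every coefficient $\leq 1$, allowing the induction to propagate.

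The main obstacle lies in the appeal to a sufficiently strong Hironaka-type resolution statement needed to factor (or dominate) the morphism $Y\to X'$ through SNC-compatible smooth blow-ups; this ingredient is classical but nontrivial, and in positive characteristic requires additional care, which presumably explains why the result is attributed to \cite{Kollar97B}. Once this tool is in hand, the elementary blow-up formula above closes the argument without any further technology.
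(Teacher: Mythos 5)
The paper does not actually prove this proposition: it is quoted verbatim from Koll\'ar's \emph{Singularities of pairs} (Corollary 3.13 there), so there is no in-paper argument to compare yours against. Your sketch is, in outline, the standard textbook proof: the ``only if'' direction is handled correctly (definition for exceptional divisors, the iterated-blow-up argument forcing $a_i\leq 1$ for the components of $D$, discrepancy $0$ for all other strict transforms), and for the ``if'' direction the reduction to the log smooth pair $(X',\Sigma)$ with $K_{X'}+\Sigma=\nu^\ast(K_X+D)$ and all coefficients $\leq 1$, followed by the monoidal discrepancy formula $a(E,W,\Sigma)=c-1-\sum_\ell b_{j_\ell}\geq -1$ and the verification that the new boundary again has coefficients $\leq 1$, is exactly the right mechanism (cf.\ Koll\'ar--Mori, Lemma 2.29 and Corollary 2.31).

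The one step that does not work as you state it is the appeal to a Hironaka-type factorization of $Y\to X'$ into smooth blow-ups transverse to the running SNC boundary. That is both stronger than what is available (no such statement in positive characteristic, and even in characteristic zero weak factorization into boundary-compatible blow-ups is a heavy and somewhat delicate tool) and unnecessary. The classical fix is to forget the auxiliary model $X''$ entirely: given a divisorial valuation $F$ over $X'$, localize $X'$ at the generic point of the center of $F$. The localized scheme is regular and local, the boundary $\Sigma$ passes through its closed point along at most $c=\operatorname{codim}$ of its components by the SNC hypothesis, and by Zariski's theorem the valuation $F$ is reached after finitely many blow-ups of the successive (zero-dimensional, hence automatically smooth and boundary-transverse) centers. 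Your single-step lemma then applies verbatim at each stage, the coefficients stay $\leq 1$, the models stay regular and log smooth, and the induction closes in arbitrary characteristic. With this replacement your argument is complete; as written, the factorization step is a genuine gap.
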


With the notation of Proposition \ref{log canon on resol}, if the pair $(X,D)$ admits a log resolution on a neighborhood of $P$ --- which is the case  when the base field $k$ has characteristic zero 
--- then the log canonicity of a pair $(X,c . D)$ near $P$ can be checked on the exceptional divisors of such a log resolution and the multiplicities of the components of $D$. In particular, the supremum defining the log canonical threshold is actually a maximum in this case, and can be computed using the geometry of this resolution and the preimage of~$D$.

Consequently, in order to apply Proposition \ref{log canon semistab}, it is enough to know the geometry of a log resolution of the pair $(\mathbb{P}^N_k, H)$ at every singular point of $H$. However it is not sufficient to know the multiplicities of $H$ at these singularities, as their  knowledge does not give a lower bound on the log canonical threshold.

Let us also mention the following result by Tian. Its statement is similar to Proposition \ref{log canon semistab}, but its proof relies on completely different, analytic arguments involving the $K$-energy.

\begin{proposition}[{\cite[Theorem 0.2]{Tian94}}]\label{log term semistable}
\label{log term stab}
If the characteristic of the field $k$ is $0$, if the pair $(\mathbb{P}^N_k,H)$  is log terminal, and if the following inequality holds:
\[
d \geq N+1,
\]
then $H$ is stable.
\end{proposition}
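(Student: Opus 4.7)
The plan is to combine Lee's log canonical threshold criterion (Proposition \ref{log canon semistab}) with a finer analysis in the critical case $d = N+1$.

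In the easy range $d > N+1$, I would first observe that log terminality implies log canonicity, and that for an effective reduced Cartier divisor the universal bound $\lct_P(\PP^N_k, H) \leq 1$ combined with log canonicity of the pair $(\PP^N_k, H)$ forces $\lct_P(\PP^N_k, H) = 1$ at every point $P \in H(k)$. Since $1 > (N+1)/d$ in this range, the strict form of Proposition \ref{log canon semistab} immediately yields stability; log terminality is not even needed here, and mere log canonicity suffices.

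The delicate case is $d = N+1$, where $H$ is anticanonical in $\PP^N_k$ and $(\PP^N_k, H)$ is a log Calabi-Yau pair. Now $\lct_P(\PP^N_k, H) = 1 = (N+1)/d$ and Proposition \ref{log canon semistab} returns only semistability, regardless of whether the pair is log canonical or log terminal, so Tian's theorem genuinely goes beyond the algebraic criterion of Lee and Okawa. To bridge this gap I would follow Tian's original analytic strategy: via the Kempf-Ness correspondence, stability of the $\SL_{N+1,k}$-orbit of $[F_H]$ is equivalent to the properness of the Mabuchi K-energy restricted to this orbit inside the space of Fubini-Study potentials on $\PP^N_k$. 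The log terminal hypothesis, interpreted through the strict positivity of the minimal log discrepancies of the pair, should translate via an adapted Moser-Trudinger inequality into a strict positivity of the asymptotic slope of the K-energy along each destabilizing one-parameter subgroup.

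The main obstacle is precisely this translation from the algebraic log terminal hypothesis to an analytic Moser-Trudinger type inequality for the K-energy: the log canonical threshold is capped at $1$, so Lee's framework is too coarse to detect it at $d = N+1$. A fully algebraic alternative would require invoking the valuative criteria for K-stability of Fujita and Li-Xu, showing that log terminality of the pair implies strict positivity of the Donaldson-Futaki invariant on every non-trivial test configuration of $(\PP^N_k, H)$. Both routes rely on characteristic zero in an essential way --- either through K\"ahler geometry or through resolution of singularities --- and both require substantially more machinery than the Hilbert-Mumford-Benoist toolkit underlying Theorem \ref{MainThm}.
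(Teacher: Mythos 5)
The paper does not actually prove this proposition: it is quoted verbatim from Tian \cite{Tian94}, and the surrounding text explicitly notes that its proof ``relies on completely different, analytic arguments involving the $K$-energy'', i.e.\ it lies outside the Hilbert--Mumford/Benoist toolkit developed in this note. Measured against that, your treatment of the range $d>N+1$ is correct and is in fact already contained in the last sentence of Proposition~\ref{log canon semistab}: log terminality implies log canonicity, log canonicity forces $H$ to be reduced and gives $\lct_P(\PP^N_k,H)\geq 1$ at every $P\in H(k)$, the universal bound for effective Cartier divisors gives $\lct_P\leq 1$, hence $\lct_P=1>(N+1)/d$ and the strict form of Lee's criterion yields stability. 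You are also right that log terminality is not needed there.

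The genuine gap is the boundary case $d=N+1$, which is the entire content of Tian's theorem beyond Lee's. You correctly diagnose that $\lct_P=1=(N+1)/d$ saturates Lee's inequality, so the log canonical threshold --- capped at $1$ --- cannot distinguish log canonical from log terminal and Proposition~\ref{log canon semistab} returns only semistability. But what you offer in its place is a research program, not a proof: the passage from the log terminal hypothesis to a strict lower bound on the asymptotic slope of the $K$-energy along each one-parameter subgroup is precisely the hard analytic core of Tian's argument (a Moser--Trudinger/H\"ormander-type estimate exploiting an integrability condition equivalent to log terminality), and you flag it yourself as ``the main obstacle''. The alternative route through valuative criteria for K-stability is both anachronistic and incomplete, since K-stability of the pair would still have to be reconnected to GIT stability of the point $[F_H]$, which is not automatic. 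As written, your proposal establishes the proposition only for $d>N+1$; for $d=N+1$ you must either carry out Tian's analytic estimate in full or, as the paper does, simply cite \cite{Tian94}.
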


\subsection{Comparison of Theorem \ref{MainThm} with Lee's results}

The criterions of (semi)stability given in Propositions \ref{log canon semistab} and \ref{log term semistable} and in  our Theorem \ref{MainThm} and its variants in Subsection \ref{subsecVar} are somewhat similar: all of them involve a numerical condition, in the form of a lower bound on the degree of the hypersurface, compared to invariants depending on the local geometry of its singularities.%

An obvious advantage of our results is that, contrary to the ones of Lee and Tian, they do not require the assumption:
$$d \geq N+ 1.$$

We may also compare the numerical criteria for (semi)stability provided by Proposition \ref{log canon semistab} and Theorem \ref{MainThm} in some cases where both apply and where these invariants of singularities are easily computed.
    We shall consider two specific cases:  generic images  of smooth surfaces into $\PP^3_k$, and hypersurfaces with isolated singularities admitting a smooth projective tangent cone (that is, isolated semi-homogeneous singularities).

\subsubsection{} Let $S$ be a smooth surface in some projective space $\PP^r_k$. For every projective subspace $L$ of~$\PP^r_k$ of codimension $4$ not containing $S$, the projection of center $L$:
$$\PP^{r}_k \setminus L \twoheadrightarrow \PP^3_k,$$
induces a dominant rational map between $S$ and some reduced hypersurface $H_L$ in $\PP^3_k$.

According to \cite[Theorem 3]{Roberts71}\footnote{In positive characteristic, the embedding $S \hookrightarrow \PP^r_k$ may have to be replaced by its left composition with a Veronese embedding $\PP^r_k \hookrightarrow \PP^{r'}_k$ of degree at least 2 for Roberts' result to hold. Over the complex numbers, this genericity result goes back to classical geometers of the 19th century, and in some form was already known  to Cayley.}, if $L$ is picked in some dense open subset of the Grassmannian scheme $\mathrm{Grass}_{r-4}(\PP^r_k)$, then this rational map is a finite birational morphism, and the hypersurface $H_L$ in $\PP^3_k$ only has normal crossing singularities and so-called Whitney umbrella singularities, namely singularities near which, in some local system of coordinates $(x,y,z)$, the surface $H_L$ has an equation of the form:
$$x^2 - y^2 z = 0.$$

Let us fix a projective subspace $L$ in such a dense open set, and let us denote by $d$ the degree of the resulting hypersurface $H_L$. Since pairs attached to hypersurfaces with normal crossing singularities or Whitney umbrella singularities are log canonical, for every point $P$ in $H_L(k)$, the following equality holds:
$$\mathrm{lct}_P(\PP^3_k, H_L) = 1.$$

Therefore Lee's numerical condition for semistability of the projective hypersurface $H_L$, namely:
$$\mathrm{lct}_P(\PP^3_k, H_L) \geq  \frac{4}{d} \quad \mbox{for every $P \in H_L(k)$},$$ 
becomes the following inequality:
\begin{equation} 
\label{cond Lee surf embed}
d \geq 4.
\end{equation}

To apply Theorem \ref{MainThm} to $H_L$, observe that normal crossing singularities given by the local intersection of two branches and Whitney umbrella singularities are of multiplicity $2$, with projective tangent cones in~$\PP^2_k$ given respectively by the union of two distinct hyperplanes and by one hyperplane with multiplicity~$2$. These cones are cones over hypersurfaces in projective hyperplanes~$\PP^1_k$. Moreover, normal crossing singularities given by the local intersection of three branches are of multiplicity~$3$, with projective tangent cones given by the union of three hyperplanes in general position in~$\PP^2_k$. These projective tangent cones are not cones over hypersurfaces in projective hyperplanes~$\PP^1_k$.

Therefore we can apply Theorem \ref{MainThm} with $s \in \{0,1\}$ to $H_L$: if the hypersurface $H_L$ admits normal crossing singularities given by the local intersection of three branches, we apply part (2) with $\delta = 3$, if there is no such singularity, we apply part (1) with~$\delta = 2$. 

In both cases, the numerical condition for semistability of the projective hypersurface $H_L$ is the following inequality:
\begin{equation} 
\label{cond Mordant surf embed}
d \geq 8.
\end{equation}

When moreover no irreducible component of the double curve of $H_L$ is  contained in some projective plane, the variant of Theorem \ref{MainThm} discussed in \ref{421} above shows that $H_L$ is semistable  when:
\begin{equation}
\label{cond Mordant surf embed var}
d \geq 6.
\end{equation}

Conditions \eqref{cond Mordant surf embed} and \eqref{cond Mordant surf embed var} are less general than the condition \eqref{cond Lee surf embed}.

\subsubsection{} If a Cartier divisor $D$ in a smooth $N$-dimensional $k$-scheme $X$ has an isolated singularity with multiplicity $\delta_P$ at a point $P$ such that the projective tangent cone $\PP(C_P D)$ is a smooth hypersurface in the projective space $\PP(T_P X) \simeq \PP^{N-1}_k$, then the blow-up of $X$ at $P$ is a log resolution of~$(X,D)$ in a neighborhood of~$P$. Consequently the log canonical threshold of such a singularity is given by:
$$\mathrm{lct}_P(X,D) = \min(1, N/\delta_P).$$ 

Let us consider a singular projective hypersurface $H$ in $\PP^N_k$, such that every singularity of $H$ is as above, and let us denote by $\delta \geq 2$ the maximal multiplicity of $H$ at a point of $H(k)$. 
    
Lee's numerical condition given by Proposition \ref{log canon semistab} for such a hypersurface to be semistable is the following one:
\begin{equation}      
\label{cond Lee semi hom} \min(1, N / \delta) 
\geq \frac{N+1}{d} \quad \Longleftrightarrow \quad \Big[ d \geq N+1 \quad \mbox{and} \quad d \geq \frac{(N+1) \delta}{N}\Big].
\end{equation}

Moreover every singularity $P$ of $H$ is isolated, and its projective tangent cone is a smooth hypersurface in $\PP(T_P \PP^N_k)$; in particular, if $N \geq 2$, it is not the cone of a hypersurface in some projective hyperplane of $\PP(T_P \PP^N_k)$. Consequently, if $N \geq 2$. the geometrical conditions of Theorem \ref{MainThm}, Part (2) hold in this case with $s = 0$, and the numerical condition \eqref{eq HM isol not cone} becomes the inequality:
\begin{equation}
\label{cond Mordant semi-hom} d \geq 3 (\delta - 1).
\end{equation}

Observe that, of the two numerical conditions \eqref{cond Lee semi hom} and \eqref{cond Mordant semi-hom} (the latter when $N \geq 2$), neither is more general than the other.

%
%

\end{document}